\date{February 24, 2020}
\newtheorem{theorem}{Theorem}[section]
\newtheorem{lemma}[theorem]{Lemma}
\newtheorem{proposition}[theorem]{Proposition}
\newtheorem{corollary}[theorem]{Corollary}
\newtheorem{definition}[theorem]{Definition}
\theoremstyle{remark}
\newcommand{\Res}{\text{Res}}
\newcommand{\Mon}{\operatorname{Mon}}
\newcommand{\BMon}{\operatorname{B-Mon}}
\newcommand{\Ker}{\operatorname{Ker}}
\newcommand{\supp}{\operatorname{supp}}
\newcommand{\cM}{{\mathcal M}}
\newcommand{\cP}{{\mathcal P}}
\newcommand{\cR}{{\mathcal R}}
\newcommand{\cS}{{\mathcal S}}
\newcommand{\CC}{{\mathbb C}}
\newcommand{\RR}{{\mathbb R}}
\newcommand{\TT}{{\mathbb T}}
\newcommand{\ZZ}{{\mathbb Z}}
\renewcommand{\Re}{\operatorname{Re}}
\renewcommand{\Im}{\operatorname{Im}}
\newcommand{\g}{\gamma}
\newcommand{\eps}{\epsilon}
\newcommand{\G}{\Gamma}
\title{The Cantor Riemannium}
\subjclass[2010]{30F10, 30D99, 30E25. }
\keywords{}
\author[R. P\'{e}rez-Marco]{Ricardo P\'{e}rez-Marco}
\address{CNRS, IMJ-PRG, Sorbonne Universit\'e, Paris, France}
\email{ricardo.perez-marco@imj-prg.fr}
\begin{document}

\begin{abstract} The Riemann surface of a holomorphic germ is the space generated by its Weierstrass analytic continuation.
As envisioned a century ago by \'Emile Borel, we generalize this construction using Borel monogenic continuation which gives 
a Riemannium space. Riemannium spaces
are metric, path connected, Gromov length spaces, not necessarily $\sigma$-compact.
Contrary to Borel's expectations, we construct an Riemannium space which can be fully described: The Cantor Riemannium.

\end{abstract}

\maketitle

% \bigskip
% \bigskip

\begin{figure}[h]
\centering
\resizebox{5cm}{!}{\includegraphics{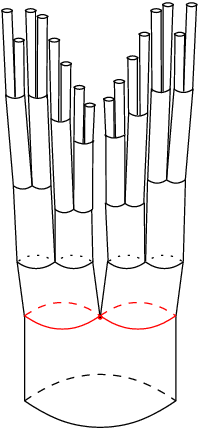}}    % name of the file - without extension
\caption{The Krueger hand: Principal sheet of the Cantor Riemannium.} 
\end{figure}

% \bigskip
% \bigskip

% Riemannium3}
% \newpage

%%%%%%%%%%%%%%%%%%%%%%%%%%%%%%%%
\section{Introduction.}
%%%%%%%%%%%%%%%%%%%%%%%%%%%%%%%%

The main construction in this article is originally inspired by an old forgotten  article of Giulio Vivanti (1888, \cite{Vi1}) 
about the cardinality  
of the values taken by a multivalued\footnote{Also named polydromic in the old literature.} analytic function at a given point, this is, 
all the values taken by all branches at this point. 

In this distant past, the notion of 
analytic continuation was not properly defined and Vivanti's article was dismissed and misunderstood. Also because 
Vivanti's pointed out that Poincar\'e didn't understand properly the ramification locus of a general function. 
If one adopts Weierstrass notion of analytic 
continuation, then the Cantor-Vivanti-Poincar\'e-Volterra 
Theorem\footnote{Nowadays improperly named Poincar\'e-Volterra Theorem.} states that the cardinal 
of branches at a point is always 
countable. The first published proof published by Vivanti (1888, \cite{Vi2}) was dismissed as erroneous. It 
suffers from the very same problems than 
Poincar\'e's earlier articles on general Riemann surfaces (for example \cite{Poi1}). Vivanti's first article 
\cite{Vi1} was motivated by these 
difficulties contained in Poincar\'e's article.

To understand the reason for the difficulties, we need to be aware of the historical context
of the original notion of Riemann surface during the XIX-th century. The birth of Riemann surfaces can 
be traced to the celebrated work of B. Riemann (1851, \cite{Rie1}, 1857, \cite{Rie2}) where he conceives them 
as the minimal space where the analytic continuation of a given holomorphic germ is well defined and single-valued. 
Before, people were talking about multiform functions, as one can read in Euler's works about the complex logarithmic function.
Riemann's revolutionary and non-intuitive idea was to pass from this ``multiformity'' of functions to a geometric 
multiformity of the domain of definition where the functions become single-valued. This breakthrough conception marked a point of transition 
to modern mathematics.

The original Riemann's Riemann surfaces had more 
structure that the modern notion proposed later by H. Weyl (1913, \cite{Wey}) and adopted everywhere today. 
They were domains over $\CC$, that  come equipped with a projection in $\CC$, i.e. they had a canonical coordinate chart. 
They also contain ``ramification points''.
Although the Riemann surface of the logarithm and its infinite ramification point was discussed by Riemann, the structure and proper 
definition of the ramification locus was only well understood for finite ramifications, 
which was important and sufficient to deal 
with the theory of algebraic functions and abelian functions (\cite{Rie2}), and 
isolated logarithmic ramifications, which appear in the fuchsian class of functions solutions of linear differential equations with 
polynomial coefficients studied by L. Fuchs (1866, \cite{Fu}). 
There was no clear understanding, nor a good definition, of more complicated ramification locus. 
This changed starting with an observation by A. Hurwitz in 1906 (\cite{Hur1}, \cite{Hur2}) about 
the singularities of inverse entire functions, when 
it became clear that more complicated singularities were natural.
In view of this historical context, the difficulty of understanding singularities is the central point  of 
Hilbert's 22nd problem (1900, \cite{Hil}). But few people today are aware of this 
because Weyl's modern notion of Riemann surface
excluded ramification points from its structure\footnote{This total absence of references 
to ramification points in Weyl's book is so blatant that one is led to suspect that 
the whole purpose of his new restricted definition was to remove these uncomfortable ramification points.}.

In 1931 S. Mazurkiewicz used a completion procedure to properly define singularities of functions (\cite{Maz}).
The original point of view was preserved  and has arrived to some modern schools  
in complex function theory (by the notable German and Russian schools of the 20th century), in particular 
in modern meromorphic function theory, where  
``concrete Riemann surfaces'' and ramifications appear as singularities of the functions studied. 
These functions are thought of as Riemann 
surfaces spread over the plane, 
the Riemann sphere, or in general onto another Riemann surface (see the classical books by Nevanlinna \cite{Nev1} 
and \cite{Nev2}, and the modern surveys by Eremenko \cite{Ere1}, \cite{Ere2}). In modern 
differential geometry presentations of Riemann Surfaces, ramification points are inexistent except 
for finite ramification points and their generalization to orbifolds in higher dimension, and related to conical singularities 
in Riemannian geometry. 

More recently, K. Biswas and the author have proposed another geometric approach by 
introducing the notion of log-Riemann surface that tries to recover faithfully the original geometric point of view 
of Riemann on Riemann surfaces (\cite{BPM1}). Roughly speaking, log-Riemann surfaces are Riemann surfaces endowed with a locally 
flat Euclidean metric pull-back by the Euclidean metric on the complex plane by the canonical projection. 
The ramification locus is obtained as the completion of log-Riemann surfaces for this 
log-Euclidean metric. This definition gives a larger and richer ramification locus than Markusewicz's one (we were unaware of Markusewicz's 
completion when proposing our theory of log-Riemann surfaces). This approach is parallel to the function 
theoretic point of view because to each holomorphic germ we can associate canonically a log-Riemann surface (see \cite{Le}). 
Conversely, sometimes, 
for example for simply connected log-Riemann surfaces and using the Uniformization Theorem, we can uniquely recover an entire function 
from the geometry of the log-Riemann surface.
But the points of view of both approaches are different, because the goals are different. 
Researchers in function theory study 
the complexity of singularities of functions. In the case of log-Riemann surface theory the goal 
is to study the geometric object by itself.
The ramification locus is part of the geometrical structure of the log-Riemann surface. 
This means that with the geometric point of view it makes
sense to consider functions extending to the ramification locus. This is very useful (see \cite{BPM4}) 
and fundamental for the development of the algebraic theory of base transcendental functions in transalgebraic 
curves (these are log-Riemann surfaces with a finite number of ramification points that can be infinite, contrary to classical 
algebraic curves). In section 5 of \cite{BPM5} a Dedekind-Weber transcendental theory (1882, \cite{DeWe}) is started. 
Classical Dedekind-Weber theory sets
the foundation of the algebraic theory of algebraic curves, is at the origin of Dedekind's definition of the notion of 
ideal that had a deep impact in Number Theory and the foundations of modern Algebraic Geometry. In log-Riemann surface theory we 
have geometric convergence notions different than those that are used in function theory and serve 
different purposes (an illustrative example can 
be found in \cite{BPM2} where a geometric proof of Euler's formula is given).
The point of view of log-Riemann surfaces is central to the present article, 
and the reader is supposed to have some familiarity with this theory.

Coming back to the historical context, years later of the inception of Riemann surfaces, there appeared new 
revolutionary ideas in complex function theoretic side.
\'Emile Borel discovered non-Weierstrassian analytic 
continuations and he introduced the notion of monogenic functions and Borel continuation. 
His insightful theory did mature from his Thesis (1894, \cite{Bo1}), was very much inspired by his research on resummation of series, 
and was finally presented in his monograph (1917, \cite{Bo2}). Borel's ideas met with strong oppostion. 
The introduction of Borel's book testifies of the opposition of the Weierstrassian school, 
in particular of G. Mittag-Leffler, to Borel's brilliant ideas \footnote{For more information, we refer to the historical article 
\cite{PMM} that has its origin on a manuscript note found by the author inside Mittag-Leffler's copy of Borel's book while he was visiting 
the Mittag-Leffler Institute in April 2023.}. 
Borel's monogenic class of functions is the precursor of quasi-analytic classes developed 
later by A. Denjoy (1921, \cite{De}) 
and T. Carleman (1926, \cite{Ca}). As was the case for Riemann, Borel's ideas were inspired by Cauchy's original point 
of view on holomorphic functions  as monogenic functions, i.e.
by $\CC$-differentiability, opposite to the Weierstrassian point of view of analytic functions (i.e. that are locally 
analytic series). It is instructive to observe how monogenicity survives at infinite ramification 
points (see section 5.4 of \cite{BPM5}).
Sometimes, these functions came from differential equations 
that yield non-Weierstrassian continuation properties. Borel insisted that the natural domain 
of definition for monogenic functions were more general domains that he named $C$ (for Cauchy) compared 
with the domains $W$ for Weierstrassian continuation that are always open sets. The typical 
domain $C$ that he describes in his book \cite{Bo2} can have empty interior. He deeply regretted that he 
believed that it was impossible to determine exactly the natural 
domain of extension $C$, contrary to the domains $W$ that were well understood. He thought to this as an 
inherent difficulty of his theory at the origin of the incomprehension of the Weierstrass school.
The monogenic classes of functions 
and domains $C$ considered by Borel are not the most general ones, as its residual measures are always 
atomic. It is relevant to 
remember what Borel wrote in the introduction of his book\footnote{``Obviously, all that is positive in the theory of Weierstrass analytic functions persists in 
the theory of non-analytic monogenic functions. But these positive aspects of the Weierstrass theory is not 
different except for minor details to those of Cauchy's theory, that used the analytic continuation by differential 
equations. Thus it is because of its negative aspects that Weierstrass theory owes its perfect logic 
that makes the admiration of their supporters. 
Replacing the domains W by the domains C we must apparently withdraw this perfection since apparently we will 
never be able to understand the exact limits where the new natural extensions are possible.''}\cite{Bo2}, 

% \medskip

\textit{``La th\'eorie des fonctions monog\`enes non analytiques laisse subsister, bien entendu, tout 
ce qu'il y a de positif 
dans la th\'eorie des fonctions analytiques de Weierstrass. Mais cette partie positive 
de la th\'eorie de Weierstrass ne 
se distingue que par des d\'etails de la th\'eorie de Cauchy, qui utilisait aussi le prolongement 
analytique par les \'equations 
diff\'erentielles; c'est \`a son c\^ot\'e n\'egatif que la th\'eorie de Weierstrass 
devait ce caract\`ere de perfection logique tant admir\'e
par ses adeptes; en substituant les domaines C aux domaines W, on doit renoncer \`a ce caract\`ere, 
car on ne pourra vraisemblablement 
jamais fixer les limites exactes au del\`a desquelles une extension nouvelle est impossible.(Borel, 1917)}

% \medskip

The main goal of this article is to present an explicit example where the Cauchy-Borel extension 
of a holomorphic germ can be fully understood, contrary to Borel's expectations expressed in the previous quote. 
It gives the first non-trivial example of 
Riemannium space, and it has many interesting properties. The function generating this Riemannium space 
has an uncountable number of branches obtained by Borel continuation that 
take an uncountable number of values at a point. This confirms and validates Vivanti's original intuition that 
turns out to be correct for Borel continuation instead of Weierstrass continuation. 

The example is build using the primitive of a function that is a Cauchy transform of a measure supported on a Cantor set. 
This function is monogenic in a maximal natural space 
that we determine explicitly.
As explained before, Weierstrass analytic continuation defines only a Riemann 
surface associated to a holomorphic germ, and even more, 
a concrete Riemann surface in the original sense 
of Riemann, or a log-Riemann surface as discussed before. 
The minimal space where the Borel monogenic extension of a holomorphic germ is single-valued defines 
a unique \textit{Riemannium space}. This natural space is not a manifold in general. It is a metric and 
path connected space but it is not in general $\sigma$-compact (as all Riemann surfaces are, by Rado's Theorem \cite{Ra}). 
It is ``concrete'' in the sense 
that it comes equipped, as do log-Riemann surfaces, with a natural projection on $\CC$ (it is a 
$\CC$-space in the sense of Bourbaki \cite{Bou}). 
The Riemannium space can contain a dense open set composed by an uncountable number of connected 
Riemann surfaces. The remaining part, the ramification locus,  has a rich structure but does not have a surface structure, and 
in general is more likely to have a fractal structure.

The goal of this article is to construct the explicit example of Riemannium space that is not a log-Riemann surface: 
The Cantor Riemannium.

% \newpage 
%%%%%%%%%%%%%%%%%%%%%%%%%%%%%%%%
\section{The Cantor tube-log Riemann surface.}
%%%%%%%%%%%%%%%%%%%%%%%%%%%%%%%%

\subsection{Cauchy transform of a measure with finite total variation.}
In this first section $K\subset \CC$ is an arbitrary compact set. In the applications in 
subsequent sections $K$ will be a well defined Cantor set.
Let $\mu$ be a Borel measure on $\CC$ with total variation
$$
|\mu| (\CC) < +\infty 
$$
and compact support $\supp \mu = K\subset \CC$.
We consider the Cauchy transform\footnote{Another normalization in the harmonic analysis literature 
is $C_\mu(z)=\int_{\CC} \frac{d\mu(t)}{z-t}$ (see \cite{To} for the current theory).} of $\mu$ 
\begin{equation}\label{eq:CT}
f_\mu(z)=\frac{1}{2\pi i}\int_{\CC} \frac{d\mu(t)}{z-t} \ .
\end{equation}
Observe that if $\mu$ is an atomic measure with a finite number of atoms, 
then $f_\mu$ is a linear combination of simple polar parts,  
the poles being the location of the atoms, and the residues the mass at each point. 
If $\displaystyle{\mu =\sum_{n=1}^N a_n \delta_{z_n}}$, then
$$
f_{\mu} =\frac{1}{2\pi i}\sum_{n=1}^N \frac{a_n}{z-z_n}\ .
$$
\begin{proposition}
Let $K\subset \CC$ be the compact support of $\mu$. The Cauchy transform
$f_\mu$ is holomorphic on $\CC-K$.

Let $(\mu_n)_{n\geq 0}$ be a sequence of Borel measures  with uniformly bounded total variation, i.e. 
there exists $M>0$ such that for all $n\geq 0$,
 $$
|\mu_n| (\CC) \leq M <+\infty
$$
If $\mu_n\to \mu$ in the weak topology, and $\supp \mu_n \to \supp \mu =K$ in Hausdorff topology, then uniformly on compact sets of $\CC-K$ we have
$f_{\mu_n} \to f_\mu$.
\end{proposition}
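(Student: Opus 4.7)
The plan is to prove holomorphy of $f_\mu$ directly from its integral representation by differentiating under the integral, and then to deduce the convergence statement by combining weak convergence of $\mu_n$ against a bounded continuous test kernel with an equicontinuity argument that upgrades pointwise convergence to uniform convergence on compact sets.

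For the first assertion I would fix $z_0\in\CC-K$, choose a closed disc $D$ around $z_0$ disjoint from $K$, and note that $(z,t)\mapsto 1/(z-t)$ together with all its $z$-derivatives is jointly continuous, hence uniformly bounded, on the compact product $D\times K$. Since $|\mu|(\CC)<\infty$, standard differentiation under the integral sign applies and gives holomorphy of $f_\mu$ on $\CC-K$. (Alternatively, Fubini combined with Morera's theorem yields the same conclusion.)

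For the second assertion, fix a compact set $L\subset\CC-K$ and set $r=\operatorname{dist}(L,K)>0$. Hausdorff convergence $\supp\mu_n\to K$ furnishes an index $N$ such that for $n\geq N$ all supports lie in $E:=K_{r/3}$, the closed $r/3$-neighborhood of $K$; this $E$ is compact and satisfies $\operatorname{dist}(L,E)\geq 2r/3>0$. On $L\times E$ the Cauchy kernel $h(z,t)=1/(z-t)$ is jointly continuous, so the family $\{h(z,\cdot)\}_{z\in L}$ is uniformly bounded and equicontinuous on $E$. For each fixed $z\in L$ I would extend $h(z,\cdot)$ to a bounded continuous function $H_z$ on $\CC$ via a fixed smooth cutoff $\chi$ equal to $1$ on $E$ and vanishing in a neighborhood of $L$; since $\chi\equiv 1$ on $\supp\mu_n$ for $n\geq N$ and on $K=\supp\mu$, one has $\int H_z\,d\mu_n=2\pi i\,f_{\mu_n}(z)$ and similarly for $\mu$, so weak convergence produces pointwise convergence $f_{\mu_n}(z)\to f_\mu(z)$ on $L$. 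The uniform continuity of $h$ on $L\times E$ combined with $|\mu_n|(\CC)\leq M$ then yields, for $n\geq N$,
\[
|f_{\mu_n}(z)-f_{\mu_n}(z')|\leq \frac{M}{2\pi}\,\sup_{t\in E}|h(z,t)-h(z',t)|,
\]
so $(f_{\mu_n})_{n\geq N}$ is equicontinuous on $L$. Pointwise convergence on a finite $\d$-net in $L$, together with this equicontinuity, upgrades to uniform convergence on $L$ by the standard three-$\e$ argument.

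The delicate point is the interaction between weak convergence of measures, which is a statement about bounded continuous test functions on $\CC$, and the Cauchy kernel, which is not globally continuous. The Hausdorff convergence of supports is precisely what allows the cutoff $\chi$ to hide the singularity, replacing $h$ by a legitimate test function $H_z$ without changing any of the integrals; without this hypothesis mass could accumulate arbitrarily close to a point of $L$ and destroy the uniform convergence, so that assumption is essential rather than cosmetic.
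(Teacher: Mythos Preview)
Your proposal is correct. For holomorphy you differentiate under the integral sign, which is exactly the paper's primary argument (the paper also offers an alternative via approximation by finitely atomic measures, but that is not needed).

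For the convergence statement your route differs from the paper's. The paper dispatches it in one line, invoking ``Lebesgue dominated convergence,'' which is imprecise as stated since the measures $\mu_n$ themselves are varying and there is no single dominating measure in play. Your argument actually fills in what is needed: you use Hausdorff convergence of supports to confine all $\supp\mu_n$ (for large $n$) to a fixed compact $E$ disjoint from $L$, then introduce a cutoff so that the Cauchy kernel becomes a bounded continuous test function on $\CC$, obtaining pointwise convergence from weak convergence of measures; finally you upgrade to uniform convergence on $L$ via equicontinuity, which follows from the uniform continuity of $1/(z-t)$ on $L\times E$ and the uniform mass bound. This is a genuinely complete proof, and your closing remark about why the Hausdorff-convergence hypothesis is essential rather than cosmetic is a point the paper leaves implicit. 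What the paper's terse phrasing buys is brevity; what your approach buys is that every step is justified.
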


\begin{proof}
Outside of $K$, since $\mu$ has finite total variation, the integral (\ref{eq:CT}) 
defining the Cauchy transform is absolutely convergent, and we can exchange derivation and integration signs, 
for $z\in \CC-K$
$$
\bar\partial_z f_\mu(z) =\frac{1}{2\pi i}\int_\CC \bar\partial_z \left (\frac{1}{z-t} \right ) \, d\mu(t) =0
$$
Alternatively, we can argue that atomic measures with a finite number of atoms is a weak-dense 
set in the space of measures with uniformly bounded total variation. 
Also we can approximate $\mu$ by such measures $(\mu_n)$ with support in $K$. The Cauchy transforms $f_{\mu_n}$ are meromorphic functions, holomorphic on $\CC-K$, 
and by Lebesgue dominated convergence Theorem they converge uniformly on compact sets of $\CC-K$ to $f_\mu$, hence $f_\mu$ is holomorphic on $\CC-K$.

For the second statement we observe that we have also $f_{\mu_n}\to f_\mu$ uniformly on compact sets of $\CC-K$ by Lebesgue dominated convergence Theorem.
\end{proof}

Thus the integral defines a locally holomorphic function at each point of $\CC-K$. 
% We can compute 
% the monodromy around each loop in this domain.

\begin{proposition}[Monogenic Residue Formula]\label{prop:mon_residue}
Let $\Omega$ be a  Jordan domain such that  $\gamma=\partial \Omega \subset \CC-K$.
Then we have
$$
\int_\gamma  f_\mu (z) \, dz =  \mu (\Omega)
$$
\end{proposition}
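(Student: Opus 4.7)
The natural approach is to exchange the order of integration and reduce to the classical winding-number computation. The plan is as follows.

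First, I would apply Fubini's theorem to rewrite
\[
\int_\gamma f_\mu(z)\, dz = \frac{1}{2\pi i}\int_\gamma \int_\CC \frac{d\mu(t)}{z-t}\, dz = \int_\CC \left(\frac{1}{2\pi i}\int_\gamma \frac{dz}{z-t}\right) d\mu(t).
\]
The justification for the swap is the routine part: since $\gamma$ is compact and $\gamma \cap K = \emptyset$, the distance $\delta = \mathrm{dist}(\gamma, K) > 0$, so on $\gamma \times K$ the kernel satisfies $|z-t|^{-1} \leq \delta^{-1}$. Combined with the finite length of $\gamma$ and the finite total mass of $\mu$, the double integral converges absolutely, and Fubini applies.

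Next I would evaluate the inner integral. For each fixed $t \in K$, since $t \notin \gamma$, the integral $\frac{1}{2\pi i}\int_\gamma \frac{dz}{z-t}$ is the winding number of the Jordan curve $\gamma = \partial\Omega$ around $t$, which equals $1$ if $t \in \Omega$ and $0$ if $t \in \CC \setminus \overline{\Omega}$. Substituting back,
\[
\int_\gamma f_\mu(z)\, dz = \int_\CC \mathbf{1}_\Omega(t)\, d\mu(t) = \mu(\Omega),
\]
where the boundary $\gamma$ contributes nothing because $\mu(\gamma) = 0$ (indeed $\gamma \subset \CC - K$ and $\supp\mu = K$).

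An alternative route, in the spirit of the preceding proposition, would be to approximate $\mu$ weakly by finitely atomic measures $\mu_n$ supported in $K$. For each $\mu_n$ the Cauchy transform $f_{\mu_n}$ is a sum of simple poles located in $K$, and the classical residue theorem gives $\int_\gamma f_{\mu_n}(z)\, dz = \mu_n(\Omega)$. The uniform convergence $f_{\mu_n} \to f_\mu$ on compact subsets of $\CC - K$ (in particular on $\gamma$) handles the left-hand side, while weak convergence together with $\mu(\partial\Omega) = 0$ handles the right-hand side via the portmanteau theorem. The only nontrivial point in either approach is controlling behavior near $\gamma$, and both strategies exploit the same crucial fact: $\gamma$ stays at positive distance from $K$.
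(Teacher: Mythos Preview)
Your primary argument via Fubini and the winding-number identity is correct and more direct than the paper's proof, which follows exactly your ``alternative route'': it approximates $\mu$ by finitely atomic measures $\mu_n$, applies the classical residue theorem to each $f_{\mu_n}$, and passes to the limit using the uniform convergence $f_{\mu_n}\to f_\mu$ on $\gamma$ from the preceding proposition. Your Fubini approach is self-contained and bypasses the approximation apparatus; the paper's route, on the other hand, reinforces the atomic-to-general-measure theme that it reuses repeatedly afterward (e.g.\ in Proposition~\ref{prop:mon_residue2}). One minor caveat: your Fubini justification invokes ``the finite length of $\gamma$'', but an arbitrary Jordan curve need not be rectifiable. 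This is harmless, since $\gamma\subset\CC-K$ can be replaced by a homotopic rectifiable curve without changing the integral of the holomorphic function $f_\mu$; the paper is equally informal on this point.
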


\begin{proof}
The classical residue formula for the meromorphic functions $f_{\mu_n}$ approximating $f_\mu$ with atomic measures $(\mu_n)$ gives
$$
\int_\gamma  f_{\mu_n} (z) \, dz = \mu_n (\Omega)
$$
and we pass to the limit $n\to +\infty$.
\end{proof}

With some extra assumptions we can extend this result to the case when $\gamma \cap K \not= \emptyset$.

\begin{proposition} \label{prop:mon_residue2}
Let $\Omega$ be a  Jordan domain and  $\gamma=\partial \Omega$. Let $(\mu_n)$ be a sequence 
of measures converging to $\mu$ with uniformly bounded total variation.
If $V_\epsilon (\gamma)$ is the $\epsilon$-neighborhood of $\gamma$ we assume that
$$
\lim_{n\to +\infty} \mu_n(V_\epsilon (\gamma)) =0 \ .
$$
Then, the limit measure $\mu$ has no mass on $\gamma$, and we have
$$
\lim_{n\to +\infty} \int_\gamma  f_{\mu_n} (z) \, dz = \mu (\Omega)
$$
In particular, if $\gamma$ is rectifiable and $\mu(\gamma)=0$, and $(f_{\mu_n})$ converges uniformly to a continuous function $f_\mu$ on $\gamma$, then
$$
\int_\gamma  f_{\mu} (z) \, dz = \mu (\Omega)
$$
\end{proposition}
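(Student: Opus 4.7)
The plan is to reduce to Proposition \ref{prop:mon_residue} fibrewise in $n$ and then pass to the limit using the weak convergence $\mu_n \to \mu$, the extra tameness hypothesis on $\mu_n$ near $\gamma$, and the Portmanteau theorem.

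First I would establish, for each fixed $n$, the residue identity
$$
\int_\gamma f_{\mu_n}(z)\,\dd z = \mu_n(\Omega).
$$
Since $\mu_n$ is assumed to have no atoms on $\gamma$, one can approximate $\mu_n$ itself in the weak topology by finitely supported atomic measures whose atoms avoid $\gamma$; Proposition \ref{prop:mon_residue} applies to each such approximation, and the conclusion persists in the limit by the Lebesgue dominated convergence argument already used in the proof of that proposition (the integrand $1/(z-t)$ stays uniformly bounded on $\gamma$ once the atomic approximations are taken with support uniformly separated from $\gamma$, which is possible because $\mu_n$ has no atomic mass on $\gamma$).

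Second I would pass to the limit in $n$ on the right-hand side. The hypothesis that $\mu_n(V_\varepsilon(\gamma)) \to 0$ for every $\varepsilon > 0$, combined with the open-set half of Portmanteau applied to the weak convergence $\mu_n \to \mu$, gives
$$
\mu(V_\varepsilon(\gamma)) \le \liminf_{n\to +\infty} \mu_n(V_\varepsilon(\gamma)) = 0
$$
for every $\varepsilon>0$. Letting $\varepsilon \to 0^+$ yields $\mu(\gamma)=0$, so $\partial\Omega$ is $\mu$-negligible and $\Omega$ is a $\mu$-continuity set. The equality case of Portmanteau then delivers $\mu_n(\Omega) \to \mu(\Omega)$, and combined with the first step this proves $\int_\gamma f_{\mu_n}\,\dd z \to \mu(\Omega)$.

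For the ``in particular'' clause I would use the rectifiability of $\gamma$ (finite one-dimensional length) together with the assumed uniform convergence $f_{\mu_n} \to f_\mu$ on $\gamma$ to swap limit and integral on the left-hand side, yielding $\int_\gamma f_{\mu_n}\,\dd z \to \int_\gamma f_\mu\,\dd z$; reading the displayed ``$\nu_n$'' as a typographical slip for $\mu$, the desired closed-form identity $\int_\gamma f_\mu(z)\,\dd z = \mu(\Omega)$ follows. The main obstacle I expect is the first paragraph: one must be a bit careful about passing the atomic-approximation argument of Proposition \ref{prop:mon_residue} across a contour $\gamma$ that may now meet $\supp \mu_n$, and the ``no atoms on $\gamma$'' hypothesis is exactly the minimum needed to do this cleanly. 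The second paragraph is pure measure theory and is essentially forced by the hypotheses.
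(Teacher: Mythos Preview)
Your proposal is correct and follows the same approach as the paper, which reduces everything to the single claim $\mu_n(\Omega)\to\mu(\Omega)$ and then invokes the residue identity for each $n$; the paper's proof is literally the one sentence ``Under the previous assumptions, we have $\mu_n(\Omega)\to \mu (\Omega)$ and the result follows.'' Your version is more careful in that you spell out (i) why the per-$n$ residue identity $\int_\gamma f_{\mu_n}=\mu_n(\Omega)$ holds even when $\gamma$ may meet $\supp\mu_n$ (via the no-atoms-on-$\gamma$ hypothesis and atomic approximation), and (ii) why $\mu_n(\Omega)\to\mu(\Omega)$ via Portmanteau and the $\mu$-continuity of $\partial\Omega$---both of which the paper leaves implicit.
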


\begin{proof}
We can consider the restriction $\nu_{n, \eps}$ of the measure $\mu_n$ to $\CC-V_\epsilon (\gamma)$. Then the path $\g$ is now outside 
the support of the measure $\nu_{n, \eps}$ and by the previous Proposition we have
$$
\int_\gamma  f_{\nu_{n, \eps}} (z) \, dz = \nu_{n, \eps} (\Omega)
$$

Because of the uniform boundedness 
of the total variation, if we denote by $\mu'_n$ the restriction of the measure $\mu_n$  to $\CC-\gamma$, then 
we have that $f_{\nu_{n, \eps}}\to f_{\mu'_n}$ and $\nu_{n, \eps} (\Omega) \to \mu'_n(\Omega)$ when $\eps\to 0$ uniformly on $n$, then 
we get 
$$
\int_\gamma  f_{\mu'_{n}} (z) \, dz = \mu'_{n} (\Omega)
$$
Finally, since the limit measure $\mu$ has no mass on $\gamma$, we have that $f_{\mu'_{n}} \to f_\mu$ and $\mu'_n (\Omega) \to \mu(\Omega)$. 
Passing to the limit $n\to +\infty$, we have the result. 

\end{proof}

\subsection{The triadic Cantor set with its equilibrium measure.}

We study an explicit example of the above.
Let $C\subset [0,1]$ 
be the usual triadic Cantor set. It can be generated by the two affine maps $A_0(x)=3x$ and $A_1(x)=3x-2$ 
as the set of points 
$$
C=\bigcap_{n\geq 0} \bigcup_{\epsilon\in (\ZZ/2\ZZ)^n} \left (A_{\epsilon_1}\circ \ldots A_{\epsilon_n} \right )^{-1}([0,1])
$$
We can identify the Cantor set $C$ with the ring of $2$-adic integers $\ZZ_2$. The Haar measure $\mu_{\ZZ_2}$ 
gives the equilibrium probability measure $\mu$ on $C$. Observe that we can obtain $\mu$ as the weak limit 
of purely atomic probability measures $\mu_n$ that are the Haar measures of $\ZZ_2 /2^n \ZZ_2\approx \ZZ/2^n\ZZ$. Considering  
the standard representation of the 
triadic Cantor set, these are atomic measures composed by atoms of equal mass $2^{-n}$ at the $2^n$ 
end-points of the removed intervals 
from $[0,1]$ at depth $n\geq 1$. For instance, we have 
\begin{align*}
\mu_1 &= \frac12 \delta_0 + \frac12\delta_1 \\
\mu_2 &= \frac{1}{4}  \delta_0 + \frac{1}{4} \delta_{1/3} + \frac{1}{4} \delta_{2/3} + \frac{1}{4} \delta_1 \\
\mu_3 &= \frac{1}{8}  \delta_0 + \frac{1}{8} \delta_{1/9} + \frac{1}{8} \delta_{2/9} + 
\frac{1}{8} \delta_{1/3} +\frac{1}{8} \delta_{2/3} +\frac{1}{8} \delta_{7/9}+\frac{1}{8} \delta_{8/9}+\frac{1}{8} \delta_{1}\\
 &\vdots\\
\end{align*}
The triadic Cantor set $C$ just described has Hausdorff dimension $\dim_H C= \frac{\log 2}{\log3}$. The Hausdorff measure of this dimension 
coincides with $\mu$.

\subsection{Monodromy of the primitive of the Cauchy transform of the equilibrium measure.}

We consider the translated Cantor set $K=T_{-1/2}(C)$, where $T_a(z)=z+a$ is the translation, 
so that the origin $z=0$ is the center of symmetry. 
Let $\left (T_{-1/2}\right )_*\mu$ be the transported measure that we still denote $\mu$.

We consider the Cauchy transform
$$
f_\mu(z)=\frac{1}{2\pi i}\int_{\CC} \frac{d\mu(t)}{z-t}\ .
$$
By symmetry $f_\mu$ is odd and we have $f_{\mu}(0)=0$. We consider the primitive
$$
F_\mu(z)=\int_0^z f_\mu(t)\, dt
$$
which defines a holomorphic germ in a neighborhood of $0$ and is multivalued on $\CC-K$.  
Given a loop $\gamma \subset \CC-K$, with base point at $0$ and homotopy class
$[\gamma]\in \pi_1(\CC-K, 0)$, by analytic 
continuation we associate a value of the branch of $F_\mu$ along $\gamma$ that only depends on its homotopy class,
$$
\gamma \mapsto \Mon_\gamma(F_\mu)\ .
$$
The monodromy map to the additive group of complex numbers $\Mon: \pi_1(\CC-K, 0) \to (\CC, +)$ constructed in that way is a group morphism.

\begin{proposition}\label{prop:W-mono}
The monodromy of $F_\mu$ at $z=0$, that is the set of values that the multivalued function $F_\mu$ can take at $z=0$, is the additive group
of dyadic rationals,
$$
\Mon_{z=0} \, F_\mu =\bigoplus_{k\geq 0} 2^{-k} \ZZ =2^{-\infty} \ZZ
$$
\end{proposition}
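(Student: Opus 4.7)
The plan is to identify the monodromy of $F_\mu$ along a loop $\gamma \subset \CC \setminus K$ based at $0$ with the contour integral $\int_\gamma f_\mu(z)\,dz$ (immediate from the definition of $F_\mu$ as a path primitive), and then to evaluate this integral using the Monogenic Residue Formula (Proposition \ref{prop:mon_residue}) together with the self-similar structure of $K$. Both inclusions then reduce to bookkeeping on winding numbers and cylinder masses. Note that $0 \notin K$ (since $1/2 \notin C$), so loops based at $0$ are well defined.

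\textbf{Upper inclusion $\Mon_{z=0} F_\mu \subseteq 2^{-\infty}\ZZ$.} Let $\gamma \subset \CC \setminus K$ be a loop based at $0$. By compactness $d(\gamma, K) > 0$. Using the fact that the depth-$n$ triadic intervals have length $3^{-n}$ and are separated by gaps of length at least $3^{-n}$, I fix $n$ large enough that each basic cylinder $C_\epsilon$ ($\epsilon \in (\ZZ/2\ZZ)^n$) sits inside an open Jordan disk $D_\epsilon$, with the $\overline{D_\epsilon}$ pairwise disjoint and all disjoint from $\gamma$. Let $w_\epsilon$ denote the winding number of $\gamma$ around any point of $D_\epsilon$. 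The cycle $\gamma - \sum_\epsilon w_\epsilon\,\partial D_\epsilon$ has zero winding number about every point of $K$, hence about every point of $\bigcup_\epsilon \overline{D_\epsilon}$, and is therefore null-homologous in the open set $U = \CC \setminus \bigcup_\epsilon \overline{D_\epsilon}$, on which $f_\mu$ is holomorphic. Cauchy's homological theorem applied in $U$, combined with Proposition \ref{prop:mon_residue} applied to each $D_\epsilon$, then gives
$$\int_\gamma f_\mu\,dz \;=\; \sum_\epsilon w_\epsilon \int_{\partial D_\epsilon} f_\mu\,dz \;=\; \sum_\epsilon w_\epsilon\,\mu(D_\epsilon) \;=\; 2^{-n}\sum_\epsilon w_\epsilon \;\in\; 2^{-n}\ZZ \subseteq 2^{-\infty}\ZZ.$$

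\textbf{Lower inclusion.} For every $n \geq 0$ and every $\epsilon \in (\ZZ/2\ZZ)^n$ there exists a simple Jordan loop $\gamma_\epsilon \subset \CC \setminus K$ enclosing exactly the cylinder $C_\epsilon$ (this is possible by the separation estimate above). Conjugating by a path from $0$ to $\gamma_\epsilon$ produces a loop based at $0$ with the same monodromy, and Proposition \ref{prop:mon_residue} gives $\Mon(\gamma_\epsilon) = \pm \mu(C_\epsilon) = \pm 2^{-n}$. Since $\Mon$ is a group morphism into the abelian group $\CC$, the image contains the subgroup generated by $\{2^{-n} : n \geq 0\}$, namely $2^{-\infty}\ZZ$.

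\textbf{Main obstacle.} The delicate step is the homological decomposition $\gamma \sim \sum_\epsilon w_\epsilon\,\partial D_\epsilon$ in $U$ and the invocation of the homological Cauchy theorem. This is essentially a finite-rank statement because $H_1(U)$ is freely generated by the $[\partial D_\epsilon]$, but the winding-number bookkeeping must be set up explicitly. The remaining geometric lemma — that each depth-$n$ cylinder admits a Jordan disk neighborhood disjoint from all other cylinders and from a prescribed loop $\gamma$ — is routine and follows quantitatively from the $3^{-n}$-spacing of the triadic construction.
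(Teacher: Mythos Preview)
Your proof is correct and follows essentially the same approach as the paper: both compute the monodromy of a loop via the Monogenic Residue Formula (Proposition~\ref{prop:mon_residue}) applied to the level-$k$ pieces of $K$, each of $\mu$-mass $2^{-k}$, and both realize the lower inclusion by encircling a single piece. The paper's argument treats only simple Jordan loops based at $0$ and implicitly relies on these generating the image of the monodromy homomorphism; your homological decomposition $\gamma \sim \sum_\epsilon w_\epsilon\,\partial D_\epsilon$ makes this step explicit for arbitrary loops, which is a worthwhile clarification but not a genuinely different route.
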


\begin{proof}
Consider a Jordan loop $0\in \gamma \subset \CC-K$ containing the base point  $z=0$ and not homotopic to $0$. 
Then, taking $k\geq 1$ large enough, $\gamma$ encloses in its bounded 
component exactly $1\leq n(\gamma)\leq 2^k-1$ pieces of level $k$ of the Cantor set $K$. Then, 
using the Monogenic Residue Formula from Proposition \ref{prop:mon_residue} we have
$$
\int_\gamma  f_{\mu} (t) \, dt =  n(\gamma) 2^{-k}
$$
and this proves that the monodromy is in the dyadic rationals. On the other hand  enclosing 
a single fundamental piece at level $k$ and winding around 
the appropriate number of times we can achieve any monodromy of the form $ n2^{-k}$ for arbitary $n\in \ZZ$ given beforehand.
\end{proof}

\subsection{The Cantor tube-log Riemann surface.}
We refer to \cite{BPM1} and \cite{BPM2} for 
the definition of log-Riemann surfaces and to \cite{BPM3} for tube-log Riemann surfaces and a general description of tube-log Riemann 
surfaces associated to primitives of rational functions. 

The Cantor tube-log Riemann surface $\cS_\mu$ is the tube-log Riemann surface associated to $F_\mu$.
This means that the map $F_\mu$ is the uniformization of $\CC-K$ into a tube-log Riemann surface $\cS_\mu$ that we are now describing. 
According to \cite{BPM3}, to each rational function $R\in \CC(z)$, with set of poles and zeros $\cP_R$ (we assume that $0$ is not a pole), 
there corresponds a unique, up to normalization, pointed tube-log Riemann 
surface $(\cS_R, z_0)$ such that if 
$$
F(z)=\int_{0}^z R(t) \, dt
$$
the germ of $F$ at $0$ extends into the uniformization $F: \overline{\CC}- \cP_R \to \cS_R$  with $F(0)=z_0$.

The rational function $f_{\mu_n}$ has only simple poles of equal residues $2^{-n}$. The description of the monodromy of $F_{\mu_n}$ proves that 
the associated tube-log Riemann surface $\cS_{\mu_n}$ is obtained by pasting a sequence of copies of annuli $A_m$, $0< m\leq +\infty$, 
$$
A_m  = \{z\in \CC; 0< \Re z < m \}/(2\pi i)\ZZ
$$
We use $1+2+\ldots +2^n =2^{n+1}$ annuli glued together as a dyadic tree, by gluing their boundary by a translation map (the reader can find a similar construction in \cite{PM1})
and we obtain the surface in Figure 2. The first and the last annuli have infinite modulus and the others finite. All critical points of $F_{\mu_n}$ are real and there 
is exactly one critical point of $F_{\mu_n}$ inside each interval determined by consecutive poles.

\begin{figure}[h]
\centering
\resizebox{10cm}{!}{\includegraphics{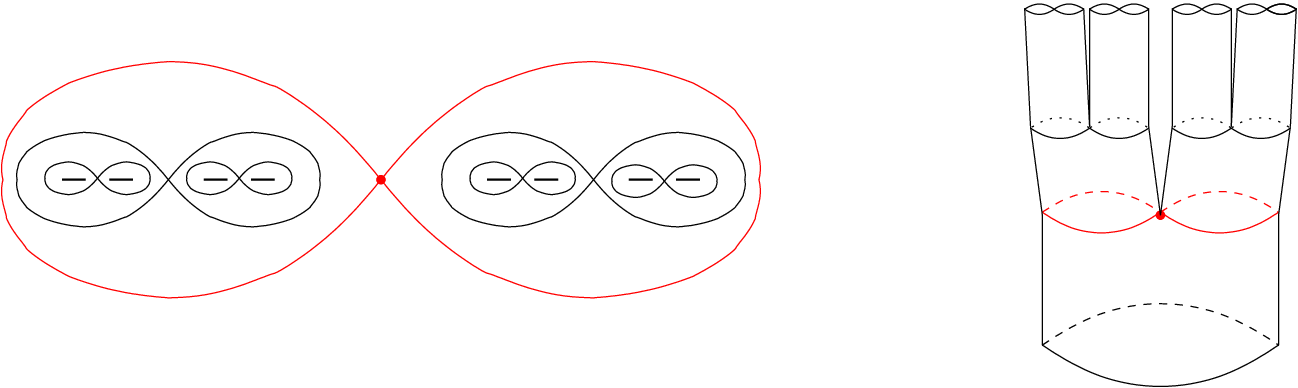}}    % name of the file - without extension
\put (-100,45) {\scriptsize $\longrightarrow$}
\put (-103,54) { $F_{\mu_n}$}
\caption{Tube-log Riemann surface of $F_{\mu_n}$.} 
\end{figure}

For the primitive $F_\mu$, the associated tube-log-Riemann surface $\cS_{\mu}$ is constructed in a similar way but with an infinite number of annuli. There is 
exactly one critical 
point inside each removed interval generating the Cantor set.
The sequence of moduli along a branch that defines an end add up to infinity because of self-similarity. Consider the two annuli, symmetric with respect to $0$,
\begin{align*}
B_0 &= B(1/2,3/4)-\overline{B} (1/6, 1/4) \\
B_1 &= B(1/2,3/4)-\overline{B} (5/6, 1/4) \\
\end{align*}
Then given the point $z_{\boldsymbol{\epsilon}} \in K$, we have a nested of essential annulus defining fundamental neighborhoods of $z_{\boldsymbol{\epsilon}}$, 
$B_{\epsilon_0}, A_{\epsilon_1}(B_{\epsilon_0}), A_{\epsilon_2}\circ A_{\epsilon_1}(B_{\epsilon_0}), \ldots$ All these annuli have the same moduli and
$$
\sum_{n=0}^{+\infty} \mod A_{\epsilon_n}\circ \ldots \circ  A_{\epsilon_1}(B_{\epsilon_0}) =+\infty
$$
Using a standard criterium (see \cite{PM1} Lemma 2.17) we have that $\CC-K$ is in the $O_{AD}$ class of Riemann surfaces. This implies the following:
\begin{proposition}
Let $z\to z_{\boldsymbol{\epsilon}}\in K$ along a path $\eta \subset \CC-K$, then
$$
\lim_{z\to z_{\boldsymbol{\epsilon}}} \Im F_\mu (z) =+\infty
$$   
\end{proposition}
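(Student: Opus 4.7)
The plan is to combine the self-similar chain of nested essential annuli around $z_{\boldsymbol{\epsilon}}$ with the $O_{AD}$ property of $\CC-K$ and the tube-log description of $\cS_\mu$. First I fix the chain $\Omega_k := A_{\epsilon_k}\circ\cdots\circ A_{\epsilon_1}(B_{\epsilon_0})$ for $k\geq 1$ (with $\Omega_0:=B_{\epsilon_0}$), which by affine self-similarity of the IFS $(A_0,A_1)$ consists of nested essential annuli of common modulus $M:=\operatorname{mod}(B_{\epsilon_0})>0$. Any path $\eta\subset\CC-K$ with $\eta(t)\to z_{\boldsymbol{\epsilon}}$ eventually enters every $\Omega_k$, so it crosses an infinite sequence of disjoint essential annuli of uniform positive modulus.

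Under the uniformization $F_\mu:\CC-K\to\cS_\mu$, conformality forces each $F_\mu(\Omega_k)$ to be an essential annulus in $\cS_\mu$ of modulus at least $M$, and the images nest around the ideal end $e_{\boldsymbol{\epsilon}}$ of $\cS_\mu$ corresponding to $z_{\boldsymbol{\epsilon}}$. By Proposition~\ref{prop:W-mono} the monodromy group of $F_\mu$ equals $2^{-\infty}\ZZ\subset\RR$ and acts by real translations on $\Re F_\mu$; consequently $\Im F_\mu$ is single-valued and harmonic on $\CC-K$, and in each local cylinder $A_m$ of the tube-log surface it plays the role of the non-periodic axial coordinate. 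Each image annulus $F_\mu(\Omega_k)$ then corresponds to a slab in $\Im F_\mu$ of width bounded below by a positive constant proportional to $M$, and the slabs stack along the axial direction; since $\eta$ must cross every slab, $|\Im F_\mu(\eta(t))|\to\infty$.

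To pin the sign as $+\infty$ I would compare with the atomic approximations $F_{\mu_n}$. Near each atom $z_k^{(n)}$ of mass $a=2^{-n}>0$ one has $F_{\mu_n}(z)=(a/2\pi i)\log(z-z_k^{(n)})+O(1)$, which gives $\Im F_{\mu_n}(z)=-(a/2\pi)\log|z-z_k^{(n)}|+O(1)\to+\infty$. Since the atoms become dense in $K$ and $F_{\mu_n}\to F_\mu$ uniformly on compact subsets of $\CC-K$, the positive orientation of the axial direction in the tube-log cylinder is inherited at every Cantor end of $\cS_\mu$, so $\Im F_\mu\to+\infty$ at $z_{\boldsymbol{\epsilon}}$.

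The main obstacle will be the rigorous translation from the modulus chain to the axial slab picture, because the monodromy group $2^{-\infty}\ZZ$ is dense in $\RR$ so $\cS_\mu$ is not literally a cylinder quotient and one must argue carefully that the images $F_\mu(\Omega_k)$ really surround $e_{\boldsymbol{\epsilon}}$ essentially rather than slipping off tangentially. This is exactly where the $O_{AD}$ property supplied by Lemma~2.17 of~\cite{PM1} intervenes: were $\Im F_\mu$ to remain bounded along $\eta$, a suitable cut-off of $\exp(iF_\mu)$ localized near $z_{\boldsymbol{\epsilon}}$ would yield a non-constant analytic function on $\CC-K$ with finite Dirichlet integral, contradicting the $O_{AD}$ conclusion and forcing the escape in the axial direction $\Im F_\mu\to+\infty$.
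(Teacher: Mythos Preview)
Your proposal is correct and follows essentially the same route as the paper: the paper simply records that the nested self-similar annuli around $z_{\boldsymbol{\epsilon}}$ have moduli summing to $+\infty$, invokes Lemma~2.17 of~\cite{PM1} to place $\CC-K$ in the $O_{AD}$ class, and declares that this implies the proposition (remarking that a direct computation is also possible). Your write-up is a more explicit unpacking of that same mechanism, and your sign determination via the atomic approximations $F_{\mu_n}$ fills in a detail the paper leaves implicit.
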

The tubular ``fingers'' have infinite height. This can also be established by a direct computation.

\medskip

\textbf{Green lines.}

\medskip

The lines corresponding to level lines of $\Re F_\mu $ are the Green lines orthogonal to the equipotentials\footnote{These are not the regular equipotentials of $K$.} 
that are the level lines of $\Im F_\mu$. We observe that we have a parametrization of Green lines by $\TT=\RR/\ZZ$ and we can normalize it so that 
the two Green lines hitting $0$ are $1/4$ and $-1/4$. We denote $\TT_0$ the abstract circle without the dyadic angles,
$$
\TT_0=\TT-\bigcup_{k\geq 0} 2^{-k}\ZZ \ .
$$
The inner points $K_i\subset K$ are those points in $K$ that are not end-points of the removed intervals.
To each inner point $x\in K_i$ of $K$ set there correspond an angle $\theta \in \TT_0$ which is not a dyadic rational such that the Green line of $\theta$ and $-\theta$ land
at $x$. We define the map $\sigma: \TT_0\to K_i$, $\theta\mapsto \sigma(\theta)=\sigma(-\theta)=x$.

We observe that the prime-ends at infinity of $\cS_\mu$ correspond to the point $\infty$ and the points in the Cantor set, and the 
inner points in the Cantor set $K$ correspond to two 
prime-ends for the lines $\Re F_\mu =\pm \theta$. So there is a natural quotient of the prime-end compactification that gives the Riemann sphere.

\subsection{Julia Cantor sets.}

For those readers familiar with holomorphic dynamics, we point out that we could construct examples of Cantor sets of the above type by 
taking the Julia set of a quadratic polynomial $P_c(z)=z^2+c$ with $c\notin M$ outside the Mandelbrot set, this means that the 
iterates of the critical point $z=0$ escapes to infinity, $P_c^n(0)\to \infty$. In that case the Julia set is the set of points with 
bounded (positive) orbit, 
$$
J_c=\{ z\in \CC; (P_c^n(z))_{n\geq 0} \text{ bounded} \} 
$$
The Julia set $J_c$ obviously contains all periodic orbits and it is the closure of the set of periodic orbits. 
It is a Cantor set and we take for $\mu$ the measure of maximal 
entropy that is equivalent to the limit of equidistributed atomic measures on periodic orbits. Then the B\"ottcher coordinate of the basin of attraction 
at infinite is defined in the tube-log Riemann surface described before. The reader can consult \cite{PM1} and \cite{Em} for how to determine 
combinatorially the tube-log Riemann surface.

\section{The Cauchy transform of a singular measures and its Borel monodromy.}

We define now a new measure $\nu$ on $K$ that is singular with respect to the equilibrium measure $\mu$. 
By construction $\nu$ is an atomic measure, but we could easily modify the construction to get a non-atomic measure.
Let $(\lambda_n)_{n\geq 0}$ be a sequence of positive reals $\lambda_n>0$ decreasing fast to $0$, $\lambda_n\to 0$. 
We assume that 
\begin{equation} \label{eq:somme}
\sum_{k=0}^{+\infty} 2^{k} \lambda_k =1 \ .
\end{equation}
We consider the probability measure
$$
\nu =\sum_{k=0}^{+\infty} \lambda_k \sum_{l=1}^{2^k} \delta_{x_{k,l}} 
$$
where $(x_{k,l})_l$ are the end-points of order $k\geq 0$ defining the Cantor set $K$.

\begin{proposition}
We assume that the sequence $(\lambda_k)_{k\geq 0}$ decreases fast enough so that for some constant $\kappa >3$ 
$$
\sum_{k=0}^{+\infty} \lambda_k (2\kappa)^{k} <+\infty \ .
$$
For a dense and $F_\sigma$ set of good points $G_0\subset K$ of full $\mu$-measure, we have that for $x_0\in G_0$ the 
primitive of the Cauchy transform 
$F_{\nu} $ is a continuous function on the vertical line $x_0+i\RR$. In particular, there is a finite limit
$\lim_{y\to 0} F_{\nu} (x_0+iy) =F_{\nu}(x_0)$.
\end{proposition}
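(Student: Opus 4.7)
The strategy is to identify $G_0$ as the set of points Diophantine with respect to the endpoint array $\{x_{k,l}\}$, on which the Cauchy integral becomes absolutely integrable along vertical segments. Set
$$G_0^{(n)} = \bigl\{ x_0 \in K : |x_0 - x_{k,l}| \geq n^{-1}\kappa^{-k} \text{ for every } k \geq 0, \ 1 \leq l \leq 2^k\bigr\},$$
and $G_0 = \bigcup_n G_0^{(n)}$. Each $G_0^{(n)}$ is closed (an intersection of closed conditions), so $G_0$ is $F_\sigma$. For the measure and density, I would invoke Ahlfors $\alpha$-regularity of the equilibrium measure $\mu$ of $K$ with $\alpha = \log 2/\log 3$ (so $\mu(B(y,r)) \lesssim r^\alpha$ for $y \in K$). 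The bad set $\bigcup_l B(x_{k,l}, n^{-1}\kappa^{-k})$ then has $\mu$-mass at most $C n^{-\alpha}(2/\kappa^\alpha)^k$, and the critical identity $3^\alpha = 2$ converts the hypothesis $\kappa > 3$ into $2/\kappa^\alpha < 1$. The geometric series converges, and Borel--Cantelli gives $G_0$ of full equilibrium measure; a parallel covering argument with the atomic weights $\lambda_k$ handles the $\nu$-measure statement under the stronger assumption $\sum \lambda_k(2\kappa)^k < \infty$. Density follows from the full support of $\mu$ on $K$.

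For continuity of $F_\nu$ on the vertical line through $x_0 \in G_0^{(n)}$, I would bound the Cauchy integral termwise using $\int_0^{y_0}(d^2+t^2)^{-1/2}\,dt \leq 1 + \log(1 + y_0/d)$:
$$\int_0^{y_0} |f_\nu(x_0 + it)|\,dt \leq \frac{1}{2\pi} \sum_{k,l} \lambda_k \bigl(1 + \log(1 + ny_0\kappa^k)\bigr) \lesssim \sum_{k \geq 0} 2^k \lambda_k (1+k).$$
The sum is finite because $\sum \lambda_k (2\kappa)^k < \infty$ with $\kappa > 1$ dominates $\sum 2^k k \lambda_k$. The same estimate holds on $i[-y_0, 0]$; since $f_\nu$ is holomorphic on $\CC \setminus K$ and the vertical line $x_0 + i\RR$ meets $K$ only at the real point $x_0$, absolute integrability of $f_\nu$ along the whole line forces continuous extension of $F_\nu$ and the finite limit $F_\nu(x_0) = \lim_{y \to 0} F_\nu(x_0 + iy)$.

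The main delicate point is the measure estimate: the hypothesis $\kappa > 3$ is sharp for the covering of the bad set to have summable mass, via the identity $3^\alpha = 2$, and the same $\kappa$ then controls the termwise convergence in the continuity estimate, so the two uses of the parameter in the proof are tightly coupled. A secondary nuisance is the appearance of the rough constant $\log(1 + n y_0 \kappa^k)$, which must be absorbed into the $k$-polynomial growth dominated by the exponentially small $\lambda_k$; this is where the gap between the required $\sum 2^k k\lambda_k < \infty$ and the stronger stated hypothesis is spent.
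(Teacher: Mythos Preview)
Your continuity argument is correct but takes a longer route than the paper. The paper's key observation is that for $z = x_0 + iy$ on the vertical line and $x_{k,l}$ real, one has $|z - x_{k,l}| \geq |x_0 - x_{k,l}| \geq \kappa^{-k}$ once $k$ exceeds the Diophantine threshold; hence each term of the Cauchy-transform series is bounded by $\lambda_k \kappa^k$ \emph{uniformly in $y$}, and summing over the $2^k$ terms at level $k$ and then over $k$ gives $\sum_k \lambda_k (2\kappa)^k < \infty$. So the series for $f_\nu$ is normally convergent on the whole line $x_0 + i\RR$, $f_\nu$ itself is bounded and continuous there, and $F_\nu$ is in fact $C^1$ along the line, not merely continuous. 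Your detour through the logarithmic integral $\int_0^{y_0}(d^2+t^2)^{-1/2}\,dt$ recovers only integrability of $f_\nu$ and obliges you to absorb the factor $\log(1+ny_0\kappa^k)$ into the slack between $\sum 2^k k\lambda_k$ and $\sum (2\kappa)^k\lambda_k$; the pointwise bound is both shorter and stronger.

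There is, however, a genuine gap in your $\nu$-measure claim. Your $G_0 = \bigcup_n G_0^{(n)}$ demands $|x_0 - x_{k,l}| \geq n^{-1}\kappa^{-k}$ for \emph{every} $k\geq 0$; taking $k=k_0$, $l=l_0$ shows that every endpoint $x_{k_0,l_0}$ is excluded from your $G_0$. Since $\nu$ is purely atomic with all its mass on the endpoints, this forces $\nu(G_0)=0$, not full measure. The ``parallel covering argument'' you invoke cannot repair this: each ball $B(x_{k,l}, n^{-1}\kappa^{-k})$ contains its own center and hence carries the atom $\lambda_k\delta_{x_{k,l}}$ regardless of how small the radius is, so the bad set at level $k$ has $\nu$-mass at least $2^k\lambda_k$ and no Borel--Cantelli bound is available. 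The paper instead defines $G_0$ as the complement of the $\limsup$ of the bad sets (points \emph{eventually} far from level-$k$ endpoints) and simply asserts the density and $\nu$-measure claims without proof; your Ahlfors-regularity and Borel--Cantelli argument is a genuine addition, but it addresses the equilibrium measure $\mu$, which is not what the statement asks for.
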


\begin{proof}
Consider the set $G_0\subset K$ of points badly approximated by end-points $(x_{k,l})$, 
$$
G_0 = K-\bigcap_{k_0\geq 0} \bigcup_{k\geq k_0} \bigcup_{l=1}^{2^k} B(x_{k,l},\kappa^{-{k}})
$$
Then $G_0$ is a dense and  $F_\sigma$ set and of total $\mu$-measure.

For any $x_0\in K_0$, and $z=x_0+iy$, $y \in \RR$, we have for some $k_0\geq 1$, for $k\geq k_0$, $1\leq l\leq 2^k$,
$$
\left |\frac{\lambda_k}{z-x_{k,l}} \right | \leq  \left |\frac{\lambda_k}{x_0-x_{k,l}} \right | \leq \lambda_k \kappa^k
$$
and 
$$
\left |\sum_{l=1}^{2^k} \frac{\lambda_k}{z-x_{k,l}} \right | \leq \lambda_k (2\kappa)^k
$$
thus condition (\ref{eq:somme}) proves that the series
$$
F_{\nu} (z)=\frac{1}{2\pi i}\sum_{k=0}^{+\infty} \sum_{l=1}^{2^n} \frac{\lambda_k}{z-x_{k,l}} 
$$
is normally convergent and bounded, and defines a continuous function on $x_0+i\RR$.
\end{proof}

\begin{proposition}
The primitive of the Cauchy transform  $F_{\nu}$ is well defined, continuous 
and bounded on the full Lebesgue set $(\CC-K)\cup G_0$ and holomorphic in $\CC-K$.
\end{proposition}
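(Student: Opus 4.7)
My plan breaks the statement into three pieces—holomorphy on $\CC-K$, well-definedness of a branch, and continuous (locally bounded) extension across the good set $G_0$—treating the first two briefly and concentrating on the third.

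For holomorphy, the first proposition of section 2.1 applies directly: $\nu$ has compact support $K$ and total mass $\nu(\CC)=\sum_{k\geq 0}2^k\lambda_k=1$, so the Cauchy transform $f_\nu$ is holomorphic on $\CC-K$, and any primitive $F_\nu(z)=\int_0^z f_\nu$ computed along paths in $\CC-K$ is locally holomorphic there. Fixing the branch $F_\nu(0)=0$ and extending by analytic continuation makes $F_\nu$ a well-defined holomorphic germ; the global multivaluedness on the multiply connected complement is precisely the Borel monodromy analysed in the next section.

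The substantive step is the continuous extension across a good point $x_0\in G_0$, the two-dimensional upgrade of the one-dimensional statement just proved. The previous proposition used the easy bound $|x_0+iy-x_{k,l}|\geq|x_0-x_{k,l}|\geq\kappa^{-k}$ for $k\geq k_0$, valid on the vertical line because the horizontal coordinate is frozen; this bound fails when $z$ wanders off the line toward atoms clustered near $x_0$ in the plane. I would localise to a disc $U=B(x_0,r)$, choose $k_1\geq k_0$ with $2r<\kappa^{-k_1}$, and shrink $r$ so that $U$ contains no end-point $x_{k,l}$ with $k<k_1$ (possible since $x_0$ is an inner point), then split the series for $f_\nu$ at level $k_1$. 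The head $k<k_1$ is a finite sum of poles lying outside $U$, giving a bounded holomorphic term on $U$ with continuous primitive. In the tail $k\geq k_1$, atoms at horizontal distance at least $2r$ from $x_0$ still satisfy $|z-x_{k,l}|\geq\kappa^{-k}/2$ on $U$ and contribute a normally convergent block exactly as on the vertical line. The atoms within horizontal distance $2r$ of $x_0$ number $O(r\cdot 3^k)$ at level $k$ by triadic Cantor spacing, so their total contribution to the primitive is bounded by $\sum_{k\geq k_1}\lambda_k\cdot O(r\cdot 3^k\cdot|\log r|)$, convergent because $\sum\lambda_k(2\kappa)^k<\infty$ and $\kappa>3$. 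Crucially, each close atom produces only a logarithmic singularity of the primitive, concentrated at an end-point of $K$, hence outside $(\CC-K)\cup G_0$; on that domain the logarithms are locally bounded. Letting $r\to 0$ yields $F_\nu(z)\to F_\nu(x_0)$ for every approach $z\to x_0$ in $(\CC-K)\cup G_0$, and the limit coincides with the vertical-line value supplied by the previous proposition.

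The boundedness assertion then follows from the same uniform estimates: any compactum $K'\subset(\CC-K)\cup G_0$ stays at positive distance from every atom (atoms sit in $K-G_0$), so the logarithmic contributions are uniformly controlled on $K'$ while the normally convergent remainder provides a uniform bound. The main obstacle I expect is precisely this 2D upgrade: the blow-up of $f_\nu$ at each atom must be traded for the tamer logarithmic singularity of $F_\nu$, and the margin $\kappa>3$ above the Cantor similarity ratio $3$ must be exploited to absorb the Cantor-counting of close atoms into a convergent series. Once this structure is identified, the bookkeeping is routine.
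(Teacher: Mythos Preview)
The paper's own proof is a single sentence: ``We only need to check continuity at points $x_0\in G_0$ and this follows observing that the series in the precedent proof converges normally in $G_0$.'' Taken literally this is not enough, and you have put your finger on exactly why: the normal convergence in the preceding proposition was established only on the vertical line $x_0+i\RR$ with $x_0$ fixed, using $|z-x_{k,l}|\ge|x_0-x_{k,l}|$; on a genuine $2$-dimensional neighbourhood of $x_0$ the poles $x_{k,l}$ cluster at $x_0$ and the series for the Cauchy transform $f_\nu$ does \emph{not} converge uniformly there (indeed $f_\nu$ is unbounded near each $x_{k,l}$). Your route---pass to the primitive so that simple poles become integrable logarithms, split head/tail at a level $k_1$ tied to the radius $r$, and absorb the $O(r\cdot 3^k)$ close atoms using the margin $\kappa>3$ in the hypothesis $\sum\lambda_k(2\kappa)^k<\infty$---is the honest $2$D upgrade and supplies what the paper's one-liner leaves implicit. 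Two small points on your execution: for the ``far'' atoms the bound $|z-x_{k,l}|\ge\kappa^{-k}/2$ really uses the $G_0$ hypothesis $|x_0-x_{k,l}|\ge\kappa^{-k}$ together with $|z-x_0|<r$, not the far condition alone, so say so; and the global ``bounded'' in the statement should be read as local boundedness on compacta of $(\CC-K)\cup G_0$, since each endpoint $x_{k,l}\in K\setminus G_0$ is a genuine logarithmic singularity of $F_\nu$ accumulated by points of $\CC-K$---your argument delivers exactly this local boundedness, which is all that is used later.
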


\begin{proof}
We only need to  check continuity at points $x_0\in G_0$ and this follows observing that the series in the precedent proof converges normally in $G_0$.
\end{proof}

So, we can extend continuously the function $F_{\nu}$ on vertical lines $x_0+i\RR$ with $x_0 \in G_0$. This holds for any path 
crossing the real axis at $x_0 \in G_0$ with a non-zero angle. This defines the Borel extension of $F_{\nu}$ on this path 
(see section \ref{sec:Borel_ext}). We can define the larger set $G_1\subset K$ where this extension holds.

\begin{definition}
 Let $G_1\subset K$ be such that for $x_1\in G_1$ we have that the restriction of $F_{\nu}$ to $(x_1+i\RR)-\{x_1\}$ has a continuous extension 
 to the full vertical line $x_1+i\RR$. We have $G_0\subset G_1\subset K$ and the 
 set $P_{\nu}=K-G_1$ is defined to be the monogenic polar set (in the sense of support of the ``poles'' of the monogenic 
 extension, where it becomes infinite).
\end{definition}

\begin{theorem}\label{th:Borel_monodromy}
The values of different branches of $F_{\nu}$ at $z=0$ by Borel continuation is the Borel monodromy
$$
B_{\nu}=\BMon_{z=0} F_{\nu}\subset \RR
$$ 
that is an uncountable additive sub-group. In particular, it contains the uncountable subset
$$
\{\nu([0, x_1]); x_1\in G_1 \} \subset B_{\nu}
$$
The subgroup $M_{\nu}\subset \RR$ also contains the countable sub-group of the regular Weierstrass monodromy,
$$
W_{\nu} = \bigoplus_{n\geq 1} 2^{-n}\lambda_n \ZZ   \subset B_{\nu} \subset \RR \ .
$$
\end{theorem}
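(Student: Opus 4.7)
The plan has three tasks: establish that $B_\nu$ is an additive subgroup of $\RR$, show that it contains the Weierstrass subgroup $W_\nu$, and produce uncountably many genuinely new Borel monodromies, one for each point of $G_1$.

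That $B_\nu$ is an additive subgroup of $\RR$ is a formal property of monodromy: concatenation of admissible paths adds end-values, reversing negates them, and the constant loop contributes $0$. Realness is automatic, since every monodromy value arises as a residue sum whose coefficients and atom masses $\lambda_k$ are real. The inclusion $W_\nu\subset B_\nu$ is immediate because every closed loop in $\CC-K$ is a fortiori Borel admissible; applying the monogenic residue formula (Proposition~\ref{prop:mon_residue}) to a small loop winding once around a single atom $x_{k,l}$ produces monodromy $\lambda_k$, and varying the winding number while combining independent loops around atoms of different levels yields the entire subgroup $W_\nu$ inside $B_\nu$.

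The main content of the theorem is the uncountable contribution from $G_1$. For $x_1\in G_1$ with $x_1>0$ (the case $x_1<0$ is symmetric), I construct a closed path $\gamma_{x_1}$ based at $0$ that goes from $0$ up to $i$, right to $x_1+i$, straight down through $x_1$ to $x_1-i$, left to $-i$, and back up to $0$. Off the real axis this path lies entirely in $\CC-K$; the only point of $K$ it meets is $x_1\in G_1$, where Borel continuation is legitimate precisely because $F_\nu$ extends continuously across the vertical line $x_1+i\RR$. To evaluate the resulting monodromy I approximate $\gamma_{x_1}$ by Jordan loops $\gamma^{(n)}\subset \CC-K$ crossing the real axis at nearby non-Cantor points $x_1^{(n)}\to x_1$; by Proposition~\ref{prop:mon_residue} each $\int_{\gamma^{(n)}}f_\nu\,dz$ equals the $\nu$-mass of the enclosed rectangle, and Proposition~\ref{prop:mon_residue2} together with continuity of $F_\nu$ on $x_1+i\RR$ legitimizes passage to the limit. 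The Borel monodromy along $\gamma_{x_1}$ is therefore $\nu([0,x_1])$.

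Uncountability of $B_\nu$ then follows because the map $x_1\mapsto \nu([0,x_1])$ is strictly increasing on the uncountable set of non-endpoint points of $K$ inside $G_1$: between any two such $x_1<x_1'$ the interval $(x_1,x_1')$ accumulates endpoints of removed intervals and hence carries positive $\nu$-mass. The main obstacle is the limit argument in the third paragraph: because atoms of $\nu$ may cluster near $x_1$, one must control the $\nu$-mass of the symmetric difference between the rectangles enclosed by $\gamma_{x_1}$ and $\gamma^{(n)}$. This control is precisely what the badly-approximated condition defining $G_0\subset G_1$ and the summability hypothesis $\sum_k\lambda_k(2\kappa)^k<\infty$ from Proposition~3.1 are designed to provide.
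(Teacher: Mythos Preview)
Your proposal is correct and follows essentially the same approach as the paper: construct a Jordan loop based at $0$ that crosses $K$ transversally at a single point $x_1\in G_1$, and identify the resulting Borel monodromy with the enclosed $\nu$-mass $\nu([0,x_1])$ via the monogenic residue formula. You supply more detail than the paper does---in particular, you make explicit the approximation of $\gamma_{x_1}$ by nearby loops in $\CC-K$ and the passage to the limit (the paper simply invokes Proposition~\ref{prop:mon_residue} even though the loop meets $K$), and you spell out the strict monotonicity of $x_1\mapsto\nu([0,x_1])$ needed for uncountability---but the underlying argument is the same.
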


\begin{proof}
We can cross the Cantor set $K$ at the uncountable set of points in $G_1$, hence we can split the total measure in two parts of mass $\nu([-1/2,x_1])$ 
and $\nu([x_1,1/2])$ by a simple Jordan loop from $0$ crossing $K$ at $x_1\in G_1$. If the Jordan curve crosses the real line only at one point $x_1\in G_1$, and 
the Jordan curve is positively oriented, then 
the  Monogenic Residue Formula in Proposition \ref{prop:mon_residue} proves that we have the monodromy value $\nu([0, x_1])$. The computation of the 
regular Weierstrass monodromy follows from the same argument as in Proposition \ref{prop:W-mono} observing that the mass of each piece at level $n\geq 1$ building-up 
the Cantor set is $2^{-n}\lambda_n$.
\end{proof}

\section{The restricted Cantor Riemannium.}

\subsection{The Krueger hand, ramification locus and monogenic topology.}

As for the Riemann surface associated to the Weierstrass continuation, we want to understand the minimal space where the monogenic function $F_{\nu}$
is single-valued (or monodromic in ancient terminology). This is the Riemannium space associated to  $F_{\nu}$.

In the rest of this paragraph we give an informal geometrical description of this Riemannium 
space associated to $F_{\nu}$  (that aims to be useful to the intuition of the reader, but can be skipped). 
The Riemann surface $\cS_{\nu}$ associated to the Weierstrass continuation of $F_\nu$ is similar to the tube-log Riemann surface $\cS_\nu$ (see Figure 2). 
But this time it has an uncountable number of ``finger tubes'' with finite length. In these ``finite length fingers''  the imaginary part 
of $F_{\nu}$ is bounded from above. 
The tips of the fingers at some height $\leq H$ form a closed set that corresponds to a Cantor subset in $G_1$. The  external rays 
of bounded height (``finite length fingers'') form a smooth fan in the sense defined in \cite{Cha}. 
When the tips of the fingers are dense (this requires some homogeneous  
property of the measure $\nu$) then we have what is called a Lelek fan which has some unique topological characterization and was defined in \cite{Le}.  
We call the described geometric object  a ``Krueger hand'' (see Figure 1). The Krueger hand plays the role of the $0$-sheet of the Riemannium 
space associated to $F_{\nu}$ (as the $0$-sheet of the log-Riemann surface of the logarithm). 
The tips of the finger tubes of finite length do correspond to the subset with no isolated points $G_1\subset K$, 
and can be encoded with external angles $\theta$ such that $\sigma(\theta)=x_1\in G_1$ 
(the map $\sigma$ map angles that are not dyadic rationals 
to the landing point in $K_i\subset K$ as in section 2.4). 
For these points $x_1\in G_1$, when we continue upward along the vertical 
$\sigma(\theta)+i\RR_-$ and we cross the tip finger corresponding to $x_1$, 
we appear in the corresponding 
upper vertical line $\sigma(-\theta)+i\RR_+$ (note that the 
line orientation is reversed when we cross). 
But then the monodromy changes by addition of the non-dyadic rational $\theta \in \TT_0$. 
Thus, we do appear in a new copy. 
% It would be suitable to  formalize the definition of $\cS_{\nu}$ by means 
% of a sheaf theory associated to a mixed sheaf structures of holomorphic germs at regular points 
% $z\in \CC-K$ and path germs of Borel monogenic functions at points $x_1\in G_1$ (eventually modulo 
% addition of a dyadic constant). In the example considered 
% the construction is very concrete and we don't need such a general sheaf theory 
% extension that will be developed elsewhere. 

The tube-log Riemann surface $\cS_{\nu}$ comes equipped 
with a log-euclidean riemannian metric $|dz|$ coming from the canonical coordinate $z$ 
(defined up to a translation, hence the metric $|dz|$ is well defined), 
and we can define its ramification locus as 
its Cauchy completion as in \cite{BPM1}, \cite{BPM3} and \cite{L}.

\begin{definition}[Ramification locus]
The ramification locus $\cR_{\nu}$ of $\cS_{\nu}$ is the completion of $\cS_{\nu}$ for its log-Euclidean metric.
The completion $\cS_{\nu}^*=\cS_{\nu}\cup \cR_{\nu}$ is a path connected length space extending the monogenic metric. 
\end{definition}

Now, taking a pull-back with $F_{\nu}$, we can consider the monogenic metric on $\CC-K$.

\begin{definition}[Monogenic metric]
The monogenic metric on $\CC-K$ is defined as the pull-back of the log-Euclidean metric 
on $\cS_{\nu}$ by the map $F_{\nu}: \CC-K\to \cS_{\nu}$.
\end{definition}

Recall that $P_{\nu}=K-G_1$ is the polar set where $F_{\nu}$ becomes infinite. 
The map $F_{\nu}$ extends continuously to $F_{\nu}: \CC-P_{\nu}\to \cS_{\nu}^*$. We can 
embed $\cS_{\nu}^*$ into $\CC$ by extending 
the inverse $F_{\nu}^{-1}: \cS_{\nu} \to \CC-K$ into $F_{\nu}^{-1}: \cS_{\nu}^* \to \CC-P_{\nu}$.   
Since the log-Euclidean metric extends to the completion $\cS_{\nu}^*$, we can pull-back the metric 
space structure by $F_{\nu}$ and define in $\CC-P_{\nu}$ the monogenic metric. The associated topology on this set is 
the monogenic topology.

\begin{proposition}
The monogenic topology of $\cS_{\nu}^*$ 
defines a topology on $\CC-P_{\nu}$ that is weaker that the trace topology or the plane. 
The space $\CC-P_{\nu}$ is exhausted by the 
the monogenic closed metric balls $(\bar B(z_0,n))_{n\geq 1}$, so it is an $F_\sigma$ set 
for the monogenic topology. The closed monogenic 
balls are not closed for the standard topology. 
\end{proposition}

Borel attempted to define some elements of the monogenic topology (see in \cite{Bo2} the description 
of $C$ domains in Chapter V, pages 125-134).

\subsection{The Restricted Riemannium space associated to $F_{\nu}$.}
By pasting copies of the Krueger hand,  can define now a Restricted Cantor Riemannium $C^r_{\nu}$ 
(super-index r for ``restricted'') that is the space where the Borel extension of the 
monogenic function $F_{\nu}$ only along paths crossing $G_1$ in a finite set. 
The natural extension space is larger when we consider paths crossing $G_1$ in a countable set, 
and we will consider it 
in the next section. The construction in this section is very concrete and worth doing it first. It is the same 
idea that when we build the log-Riemann surface of the logarithm using a set of copies of $\CC$ indexed by the 
integers $\ZZ$ and paste them accordingly. Here the fundamental sheet is a Krueger hand, and we have many more ways of 
gluing the sheets together that is described in the next definition.

\begin{definition}[Restricted Cantor Riemannium]\label{def:Cantor_Riemannnium}
We construct the Restricted Cantor Riemannium  $C^r_{\nu}$ as follows:
\begin{itemize}
 \item We consider copies $(\cS^*_{\nu} (\boldsymbol{\theta}))_{\boldsymbol{\theta}}$ 
 of the completed Krueger hand $\cS^*_{\nu}$ indexed by finite sequences 
 $\boldsymbol{\theta} =(\theta_n)_{1\leq n\leq n(\boldsymbol{\theta})}$ of angles $\theta_n \in \sigma^{-1}(G_1)$ (this is 
 an uncountable number of indexes, hence of copies, and we use the axiom of choice). 
 We include a copy  $\cS^*_{\nu} (\emptyset)$ corresponding to the empty sequence, which is named  the principal sheet.
Let $j_{\boldsymbol{\theta}} :\cS^*_{\nu} \to \cS^*_{\nu}(\boldsymbol{\theta})$ be the mapping identifying these copies.
\item We glue to the  copy $\cS^*_{\nu} (\boldsymbol{\theta})$ all copies 
$\cS^*_{\nu} (\boldsymbol{\theta'})$, such that $ n(\boldsymbol{\theta'})=
n(\boldsymbol{\theta})+1$ at a single point by identifying the point 
$F_{\nu}(\sigma(\theta'_{n(\boldsymbol{\theta'})})) \in \cS_{\nu} (\boldsymbol{\theta})$ to 
the point $F_{\nu}(\sigma(- \theta'_{n(\boldsymbol{\theta'})})) \in \cS^*_{\nu} (\boldsymbol{\theta'})$
when $\sigma(\theta'_{n(\boldsymbol{\theta'})})=x_0\in G_1$.
That is, for $x_0 \in G_1$, $x_0 =\sigma(\theta'_{n(\boldsymbol{\theta'})}) =\sigma(- \theta'_{n(\boldsymbol{\theta'})})$
$$
j_{\boldsymbol{\theta}}(F_{\nu}(x_0)) = j_{\boldsymbol{\theta'}}(F_{\nu}(x_0))
$$
The space obtained $\hat C^r_{\nu}$ is path connected and has a well-defined continuous projection map 
$$
\hat \pi : \hat C_{\nu} \to \cS^*_{\nu} 
$$
such that for $z\in \cS^*_{\nu} (\boldsymbol{\theta})$, $\hat \pi (z) = \pi (j_{\boldsymbol{\theta}}^{-1}(z))$.
\item Any glued copy $\cS^* (\boldsymbol{\theta})$  in $\hat C^r_{\nu}$ can be connected to 
the principal sheet $\cS^*_{\nu} (\emptyset)$
by a path $\gamma_{\boldsymbol{\theta}}$ starting at $j_\emptyset (0) \in \cS^*_{\nu} (\emptyset)$, 
ending at $j_{\boldsymbol{\theta}}(0) \in \cS^*_{\nu} (\boldsymbol{\theta})$, 
going through 
$\cS^*_{\nu}(\boldsymbol{\theta}_1)= \cS^*_{\nu}(\boldsymbol{\theta}_2),\ldots , 
\cS^*_{\nu}(\boldsymbol{\theta}_{n(\boldsymbol{\theta})})$,
where $\boldsymbol{\theta}_j =(\theta_1,\ldots , \theta_j)$, and crossing a finite number of 
sewing points transversally to what corresponds to the real axes in each copy.

We denote by $M(\boldsymbol{\theta}) \in \CC$ the Borel monodromy of $F_{\nu}$ along $\hat \pi (\gamma_{\boldsymbol{\theta}})$ 
that is well defined by virtue of Theorem \ref{th:Borel_monodromy}.

\item We identify the copies $\cS^* (\boldsymbol{\theta})$ and $\cS^* (\boldsymbol{\theta'})$ 
if and only if $M(\boldsymbol{\theta})$ and $M(\boldsymbol{\theta'})$ are 
equal modulo the dyadic rationals, 
\begin{equation}\label{eq:modulo_mon}
M(\boldsymbol{\theta})\equiv M(\boldsymbol{\theta'}) \ \ \left [\text{mod} \  \ \bigoplus_{k\geq 0} 2^{-k} \lambda_k\ZZ \right ] 
\end{equation}
This defines an equivalence relation in $\hat C^r_{\nu}$ and the topological quotient $C^r_{\nu}=\hat C^r_{\nu}/\sim$ is the Restricted Cantor Riemannium.
\end{itemize}
\end{definition}

Observe that the projection mapping is compatible with the equivalence relation 
(equivalent classes are contained in the fibers of $\hat \pi$) and defines
a projection mapping (still denoted by $\hat \pi$),
$$
\hat \pi : C^r_{\nu} \to \cS^*_{\nu}\approx \CC-P_{\nu} 
$$

\begin{proposition}
The Restricted Cantor Riemannium $C^r_{\nu}$ is a Hausdorff topological space that is path connected and not $\sigma$-compact. It is a metric length space. 
The Restricted Cantor 
Riemannium has an open dense part composed by an uncountable number of disjoint log-Riemann surfaces for the projection $\hat \pi$,
$$
C^0_{\nu} =\hat \pi^{-1} (\cS_{\nu}) \subset C^r_{\nu}
$$
We call $C^0_{\nu}$ the regular part of the Riemannium.
The projection mapping $\hat \pi :  C^r_{\nu} \to \cS^*_{\nu}$
is a local holomorphic diffeomorphism and, for the log-Euclidean metric, a local isometry on 
the regular part, and a contraction elsewhere.
\end{proposition}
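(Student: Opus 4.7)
The plan is to verify the six claims in turn, leveraging the explicit construction in Definition~\ref{def:Cantor_Riemannnium} together with the description of the Borel monodromy from Theorem~\ref{th:Borel_monodromy}.

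First I would check Hausdorffness and path connectedness. The pre-quotient $\hat C^r_\nu$ is assembled by gluing copies of the Hausdorff space $\cS^*_\nu$ along individual sewing points, the gluings occurring transversally at a set which is locally discrete in each copy. Disjoint open neighborhoods of distinct points are therefore inherited from the Hausdorff structure of each sheet together with the transversality at the sewing points. Path connectedness is immediate from the paths $\gamma_{\boldsymbol{\theta}}$ linking every copy to the principal sheet and from the path connectedness of each completed Krueger hand. To descend these properties to $C^r_\nu = \hat C^r_\nu/\sim$ one needs the equivalence relation to have closed graph, which amounts to showing that the subgroup $\bigoplus_{k\geq 0} 2^{-k}\lambda_k\ZZ$ is closed (indeed discrete) in $\RR$; this follows from the rapid decay of $(\lambda_k)$ assumed throughout.

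To rule out $\sigma$-compactness I would analyse the fiber $\pi^{-1}(j_\emptyset(0))$ over the base point. Since $\pi$ is a local homeomorphism on the regular part and $0\in\cS_\nu$, this fiber is a closed discrete subset of $C^r_\nu$. Theorem~\ref{th:Borel_monodromy} exhibits an uncountable set of Borel monodromy values at $0$ modulo the dyadic subgroup (already the values $\nu([0,x_1])$ for $x_1\in G_1$ contribute a continuum of distinct classes), so this discrete fiber is uncountable; no $\sigma$-compact space admits such a subset. The metric length structure is obtained by pulling back the log-euclidean metric from $\cS^*_\nu$ to each copy through $j_{\boldsymbol{\theta}}$, defining the length of a rectifiable path in $C^r_\nu$ as the sum of the log-euclidean lengths of its restrictions to the sheets it traverses, and setting $d(x,y)$ to be the infimum over such paths. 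Strict positivity of $d$ requires that the sewing locus be discrete in each sheet and that identified points in different sheets cannot be approached simultaneously, which once again reduces to the closedness of the dyadic subgroup.

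For the regular part, since $\cS_\nu\subset\cS^*_\nu$ is open dense (its complement being $\cR_\nu$), the preimage $C^0_\nu=\pi^{-1}(\cS_\nu)$ is open dense in $C^r_\nu$. Its connected components are the images in $C^r_\nu$ of the open sets $j_{\boldsymbol{\theta}}(\cS_\nu)$; two such are identified by $\sim$ precisely when their associated Borel monodromies agree modulo the dyadic subgroup, so by the fiber argument above the components are uncountable in number. Each component is homeomorphic to $\cS_\nu$ and inherits from it a log-Riemann surface structure, for which $\pi$ is locally the inverse of $j_{\boldsymbol{\theta}}$, hence a local holomorphic diffeomorphism and a local isometry. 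At ramification points the projection is locally modelled on the standard collapsing map of a log-euclidean branch point to its image in $\cS^*_\nu$, which is continuous and contracting for the log-euclidean metric. The step I expect to be the main obstacle is verifying Hausdorffness of the quotient together with positivity of the length metric; both hinge on the closedness of $\bigoplus_{k\geq 0}2^{-k}\lambda_k\ZZ$ in $\RR$ and the discreteness of the sewing locus within each sheet, after which the remaining assertions follow by transferring the structure of $\cS^*_\nu$ through the identifications.
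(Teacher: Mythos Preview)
Your overall strategy is reasonable and more detailed than the paper's, but it rests on a claim that is false: the subgroup $W_\nu=\bigoplus_{k\geq 0}2^{-k}\lambda_k\ZZ$ is \emph{not} discrete (hence not closed) in $\RR$. Indeed $2^{-k}\lambda_k\in W_\nu$ for every $k$, and the standing hypothesis $\sum_k 2^k\lambda_k<\infty$ forces $2^{-k}\lambda_k\to 0$; a subgroup of $\RR$ containing a nontrivial sequence tending to $0$ is dense. You invoke discreteness of $W_\nu$ twice, for Hausdorffness of the quotient and for strict positivity of the length metric, so both of those steps collapse as written. A second, related issue is your assumption that the sewing locus is ``locally discrete in each copy'': the tips of the finite fingers correspond to the perfect set $G_1$ and, as the paper remarks, form a Lelek-type fan whose endpoints are dense, so this discreteness cannot be taken for granted either.

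The paper sidesteps both difficulties by reversing the logical order: it first builds the length metric directly and then reads off Hausdorffness as a consequence of having a genuine metric. The key observation (which you do state but do not exploit) is that distinct copies meet only at \emph{single points}; hence any path joining two points lying over the same base point but in non-equivalent sheets must leave one sheet through a tip, traverse at least one other sheet, and re-enter, which forces its log-euclidean length to be bounded below by the distance from the starting point to the ramification locus of its sheet. This makes the infimum of path-lengths strictly positive without any appeal to properties of $W_\nu$ as a subgroup of $\RR$, and Hausdorffness then comes for free. For non-$\sigma$-compactness the paper's argument is also more direct than yours: since already the principal sheet has an uncountable family of copies glued to it at distinct points, any compact set can meet only finitely many of them, and a countable union of compacta only countably many. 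Your discrete-fiber argument is correct in spirit but needs the metric to be in place first, so it is not an independent route.
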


\begin{proof}
The space is not $\sigma$-compact since we attach to $\cS_{\nu}(0)$ an uncountable number of copies.
The log-euclidian metric of  $\cS_{\nu}$ defines the length space metric in the copies that build $C_{\nu}$. 
The connections between copies are through a single point so the length metric extends along the crossing paths and in the resulting length space 
the copies are embedded isometrically. The other properties are clear.
\end{proof}

Observe that $\hat \pi : C^r_{\nu} \to \cS^*_{\nu}$ is not a classical covering of topological spaces 
since for any point $\tilde x_1\in \hat \pi^{-1}(x_1)$ with  $x_1\in G_1$ there is no neighborhood of 
$\tilde x_1$ homeomorphic to a neighborhood of $x_1$. It is even worse: we have infinite local degree. 

We recall that the Uniformization Theorem holds in the category of tube-log-Riemann surfaces. Let $\tilde \cS_{\nu}$ be the universal covering 
of the tube-log-Riemann surface $\cS_{\nu}$ which means that  $\tilde \cS_{\nu}$ is a simply connected log-Riemann surface and we have a 
covering $p_0: \tilde \cS_{\nu} \to \cS_{\nu}$ compatible with the projection mappings $\pi_0: \tilde \cS_{\nu} \to \CC$, which means 
that locally $p_0 \circ \pi_0^{-1}$ is a translation, and $p_0$ is a local isometry for their log-Euclidean metrics. 

Another way to understand $\tilde \cS_{\nu}$ is to ``unfold'' the cylinders that build $\cS_{\nu}$. 
We get in this way a log-Riemann surface that can be directly constructed 
by means of straight cuts and pastings. In Figure 3 we show a fundamental domain with the 
obvious identifications (indicated by dotted lines) to get $\tilde \cS_{\nu}$ 
(for $\tilde \cS_{\nu}$ is similar, with widths corresponding to the mass $\nu$). 
The full $0$-sheet with cuts is obtaining by extending this fundamental domain by $\ZZ$-periodicity. This $0$-sheet corresponds to the ``unfolding'' of the root-cylinder. 
Note that the set of cuts in this $0$-sheet for $\tilde \cS_{\nu}$ do not have a discrete set 
of end-points (for $\tilde \cS_{\nu}$ this set is discrete in the $0$-sheet) .

\begin{figure}[ht]
\centering
\resizebox{4cm}{!}{\includegraphics{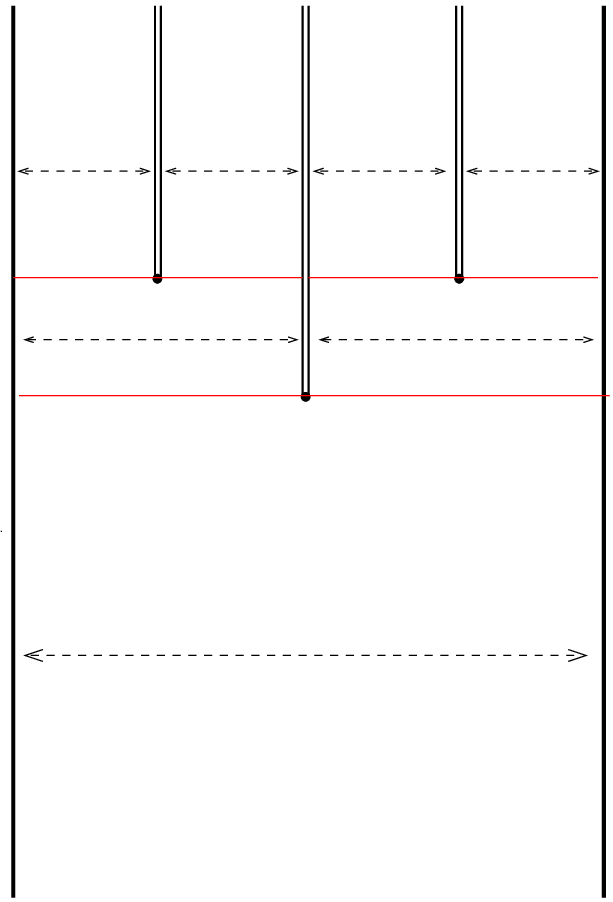}}    % name of the file - without extension
\put (-57,37) {\scriptsize $1$}
\put (-87,98) {\tiny $\frac{1}{2}$}
\put (-31,98) {\tiny $\frac{1}{2}$}
\put (-102,127) {\tiny $\frac{1}{4}$}
\put (-74,127) {\tiny $\frac{1}{4}$}
\put (-46,127) {\tiny $\frac{1}{4}$}
\put (-18,127) {\tiny $\frac{1}{4}$}
\caption{Fundamental domain for $\tilde \cS_{\nu}$.} 
\end{figure}

We can consider the  Cauchy completion of $\tilde \cS_{\nu}$ for the log-Euclidean metric and 
get $\tilde \cS^*_{\nu}$. Then 
we can start  with $\tilde \cS^*_{\nu}$ and carry out the same Cantor Riemannium construction as before. After the first two steps we 
get the space $\check C^r_{\nu}$  that is a simply connected topological space (all loops are contractible). 
The pasting construction is 
compatible with the projection $\pi_0 :\tilde \cS^*_{\nu} \to \CC$ defined on 
the copies, and this defines a projection $\check \pi : \check C^r_{\nu} \to \CC$.
The regular part of $\check C^0_{\nu}$ is defined as before.
Despite having a projection 
 $\check C^0_{\nu}\to  \CC-P_{\nu}$, this ``cover'' cannot be identified 
with the universal cover of $\CC-P_{\nu}$ because the lift property for paths fails. 
We can only lift paths in $\CC-P_{\nu}$ such that when we remove its 
end-point they do have a discrete intersection with $G_1$ transversal to $\RR$. If we consider loops in $\CC-P_{\nu}$ with base point 
at a regular point $z_0\in \CC-P_{\nu}$, for example $z_0=0$, with a finite intersection with $G_1$ and transversal with $\RR$ we have a subgroup 
$\pi^r_1(\CC-P_{\nu}, z_0)$ of $\pi_1(\CC-P_{\nu}, z_0)$. The group of ``deck transformations'' of the map 
$\check C_{\nu}\to  \tilde \cS^*_{\nu} \to \cS^*_{\nu}\approx \CC-P_{\nu} $ 
(it is not a classical cover as observed before)
is the quotient group  $\pi_1(\CC-P_{\nu}, z_0)/\pi^r_1(\CC-P_{\nu}, z_0)$.

Then we can consider the quotient of $\check C_{\nu}$ as in the last point of the construction in Definition \ref{def:Cantor_Riemannnium} 
and get 
$$
\tilde C^r_{\nu} = \check C^r_{\nu} /\sim
$$
where this time the equivalence relation $\cS^* (\boldsymbol{\theta})\sim \cS^* (\boldsymbol{\theta'})$ holds if and only if
$M(\boldsymbol{\theta})= M(\boldsymbol{\theta'})$.

We get in this way a restricted Riemannium space $\tilde C_{\nu}$  with a projection mapping
$$
\tilde \pi : \tilde C^r_{\nu} \to \CC
$$
and we have a fiber-compatible mapping
$$
\tilde p : \tilde C^r_{\nu} \to C_{\nu}
$$
that is a sort of ``universal cover Riemannium''. 
We have the correspondence of the regular parts
$$
\tilde p ( \tilde C^0_{\nu})= C^0_{\nu}
$$
and $\tilde p$ is a local isometry on the regular part for the log-Euclidean metrics. We have the commutative diagram

$$
\begin{array}{ccccc}
\check C^r_{\nu} & \longrightarrow{} & \tilde C^r_{\nu} & \longrightarrow{} & \tilde \cS^*_{\nu} \\
\downarrow  & & \downarrow & & \downarrow \\
\hat C^r_{\nu} & \longrightarrow{} & C^r_{\nu} & \longrightarrow{} &\cS^*_{\nu} \\
\end{array}
$$

We would like to carry out a similar construction as the classical algebraic topology construction  of the universal cover but we have several 
difficulties. The first one is related to the non-local simple connectivity of $\CC-P_{\nu}$ at 
points of $G_1$. We can construct Hawaian earrings at each point $x_1\in G_1$ by taking a bouquet of circles with base point at $x_1$ 
and crossing transversally $\RR$ at another point of $G_1$. But more importantly, we don't have the path lift property: There are paths that cannot 
be lifted to the Cantor Riemannium. For example, take a path $\gamma$ that wiggles between the upper and lower half planes determined by $\RR$ and converging to a point 
$x_\infty \in G_1$ and after reaching $x_\infty$ goes into the upper half plane (see Figure 4).

\bigskip

\begin{figure}[ht]
\centering
\resizebox{6cm}{!}{\includegraphics{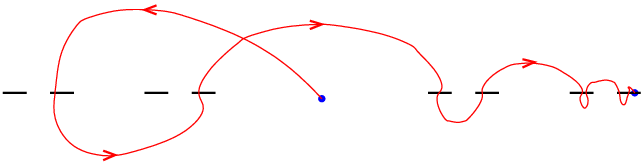}}    % name of the file - without extension
 \put (-85,10) {\scriptsize $z_0$}
  \put (-2,12) {\scriptsize $x_\infty$}
\caption{Wiggling path.} 
\end{figure}

\bigskip

There is no lift of $\gamma$ in $\check C_{\nu}$ because there is no sheet where the end-point of the lift belongs to.

The reason for this phenomenon is related to the fact that the Riemanniums $\hat C^r_{\nu}$ and $\tilde C^r_{\nu}$ are not complete for the 
log-euclidean metric, despite that the sheets $ \tilde \cS^*_{\nu}$ and $\cS^*_{\nu}$ are complete.

\begin{proposition}
The restricted Riemanniums $\hat C^r_{\nu}$ and $\tilde C^r_{\nu}$ are not complete.
\end{proposition}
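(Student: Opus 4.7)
The plan is to exhibit an explicit Cauchy sequence in $\hat C^r_{\nu}$ (respectively $\tilde C^r_{\nu}$) with no limit, by carefully lifting a wiggling path of the type sketched in Figure 4. First I would select a sequence $(x_k)_{k\geq 1}$ of pairwise distinct points of $G_1$ accumulating at some $x_\infty \in G_1$, which is possible since $G_0 \subset G_1$ is dense in the perfect set $K$ and of full $\nu$-measure, hence perfect in itself. Using these as crossings, I would construct a path $\gamma : [0,1) \to \CC - P_{\nu}$ of finite Euclidean length with $\gamma(0)=0$ and $\gamma(t)\to x_\infty$ as $t\to 1^-$, such that $\gamma$ crosses $\RR$ transversally at times $t_1<t_2<\cdots\to 1$ with $\gamma(t_k)=x_k$, alternating between the upper and lower half-planes between consecutive crossings; a standard spiraling construction, with the lengths between consecutive crossings shrinking summably, suffices.

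Next I would lift $\gamma$ stepwise into $\hat C^r_{\nu}$: each arc $\gamma|_{[t_{k-1},t_k]}$ meets $G_1$ only at its endpoints and therefore admits a unique lift into a single sheet $\cS^*_{\nu}(\boldsymbol{\theta}_{k-1})$, and at time $t_k$ the lift crosses the sewing point over $x_k$ into the new sheet $\cS^*_{\nu}(\boldsymbol{\theta}_k)$ indexed by $\boldsymbol{\theta}_k=(\sigma^{-1}(x_1),\ldots,\sigma^{-1}(x_k))$. Because $\pi$ is a local isometry on the regular part and the sheets are glued at isolated points, the log-euclidean length of each lifted arc equals the Euclidean length of $\gamma|_{[t_{k-1},t_k]}$. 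Summing, the full lift $\tilde\gamma:[0,1)\to \hat C^r_{\nu}$ has finite log-euclidean length, and in particular the sequence $(\tilde\gamma(t_k))_k$ is Cauchy in $\hat C^r_{\nu}$.

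Finally I would argue that this Cauchy sequence has no limit. If $\tilde\gamma(t_k)\to y\in \hat C^r_{\nu}$ then $\pi(y)=x_\infty$, and $y$ lies in the closure of some sheet $\cS^*_{\nu}(\boldsymbol{\theta})$ indexed by a fixed finite sequence $\boldsymbol{\theta}$; any log-euclidean neighborhood of $y$ meets only sheets glued to $\cS^*_{\nu}(\boldsymbol{\theta})$ at $y$, which are finitely many. Since $\tilde\gamma(t_k)\in \cS^*_{\nu}(\boldsymbol{\theta}_k)$ with $|\boldsymbol{\theta}_k|\to\infty$, such a neighborhood contains at most finitely many $\tilde\gamma(t_k)$, a contradiction. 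The same construction, lifted through $\tilde\cS^*_{\nu}$, yields a Cauchy sequence in $\tilde C^r_{\nu}$ with no limit. The main obstacle I anticipate is the careful verification that the log-euclidean distance in $\hat C^r_{\nu}$ is truly additive across a sewing point (needed for the Cauchy property) and that no open metric ball centered at a candidate limit can meet infinitely many of the sheets $\cS^*_{\nu}(\boldsymbol{\theta}_k)$; both facts hinge on the structural point that neighbouring sheets are identified only at isolated points and not along arcs, which should be extracted cleanly from Definition \ref{def:Cantor_Riemannnium}.
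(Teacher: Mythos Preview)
Your proposal is correct and follows the same approach as the paper: lift a finite-length wiggling path converging to some $x_\infty \in G_1$ and observe that the resulting Cauchy sequence has no limit because the lifted points lie in pairwise distinct sheets of unbounded depth. The paper's own proof is a one-line version of exactly this, simply asserting that the unique lift of $\gamma-\{x_\infty\}$ has no limit point ``because of the discrete topology on the fiber of $x_\infty$''; your flagged verification (that a metric ball about a candidate limit cannot contain infinitely many of the $\tilde\gamma(t_k)$) is precisely what makes that assertion rigorous, and it follows from the tree structure of the gluing.
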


\begin{proof}
We can take a rectifiable path $\gamma$ of finite length for the log-Euclidean metric as before. 
And a monotone Cauchy sequence $(z_n)\subset$ converging in $\CC-P_{\nu}$ to $x_1$ 
for the log-Euclidean metric in $\CC-P_{\nu}$. There is a unique lift of 
$\gamma-\{x_\infty\}$ in $\hat C_{\nu}$ (or $\tilde C_{\nu}$) but it has no limit point because of the 
discrete topology on the fiber of $x_\infty$.
\end{proof}

The proof gives a hint on how to solve this problem: We must enrich the topology of  $\hat C^r_{\nu}$ and introduce a 
natural non-discrete topology on the fibers of $\hat C^r_{\nu} \to \CC-G_1$.

To each point in a fiber $z\in \hat \pi^{-1} (x_1)$ with $x_1 \in G_1$ there corresponds a 
sequence $\boldsymbol{\theta}(z)$ in the construction. We define a distance on the 
fiber by
$$
d_0(z_1, z_2)=|M(\boldsymbol{\theta}(z_1))- M(\boldsymbol{\theta}(z_2)|
$$
This defines a metric on each fiber that makes it complete. And the fiber is no longer discrete as for a classical covering.

Now, let $x_\infty\in G_1$ and a path $\gamma$ as before, starting at the regular base point $z_0$ and ending at $x_\infty$ avoiding $P_{\nu}$, thus $\gamma \subset \CC- P_{\nu}$, 
and crossing transversally the real axes except at the end-point. 
We can homotop $\gamma$ in $\CC-K$ into $\gamma'$ so that $\gamma\cap G_1= \gamma' \cap \RR$, by just moving the connected components of $\gamma-\G_1$ 
that intersect the real line into the upper or lower half plane. Now, for the sequence of intersection points  $(x_n)\in \gamma '\cap G_1$, $x_n\to x_\infty$ we 
know  in which sheet we are by looking at 
the value of the corresponding $M(\boldsymbol{\theta}_n)$. The limit of the sequence $(M(\boldsymbol{\theta}_n))_{n\geq 1}$ indicated in which sheet is the limit in 
the fiber $\pi^{-1}(x_\infty)$, and we have
$$
\lim_{n\to +\infty} M(\boldsymbol{\theta}_n) =M(\theta_\infty) 
$$
Then the path passes through the sheet determined by $M(\theta_\infty)$ and connects to the one corresponding 
to $M(- \theta_\infty)$. This suggests that we have to do a 
further quotient of $\check C^r_{\nu}$ on the fibers $\check \pi^{-1}(x_\infty)$. This will give the unrestricted 
Cantor Riemannium $\check C_{\nu}$.

But we would like to have this extension constructed not only for paths with a countable transversal intersection with $\RR$ but those with a non-discrete intersection with $G_1$.
The same procedure works, and for this purpose we do the general construction of the next section.

\section{The Cantor Riemannium.}

\subsection{Monogenic algebraic topology.}

We carry out a similar construction\footnote{I am indebted to K. Biswas to point out  this approach.} 
of the universal covering in algebraic topology but for the non-locally simply connected space $X=\CC-P$.
We refer to \cite{Ha}, \cite{Go}, and in particular to \cite{Bou} for minimal hypothesis for classical results for relation of 
fundamental groups and coverings. 
The natural topology on the fundamental group, is the \textit{admissible topology} defined by N. Bourbaki\footnote{Special thanks to Andr\'e Gramain.} 
in  \cite{Bou}, Chap. III, 5.4, p.315. In our non locally simply connected situation, the admissible topology is not 
discrete. Also we don't have coverings with discrete fibers, but a general projection with non-discrete fibers. The key property 
to carry out a similar construction of the universal covering is to use projections with the path-lifting property. After figuring this out 
we learned about the excellent exposition by E.H. Spanier in his classical book \cite{Sp}, in particular chapter 2, where all the strength of 
the path-lifting property is used for Hurewicz fibrations. But the projections we have are not a priori Hurewicz fibrations. 
The situation we have is closer to Serre fibrations in one topological dimension where we have the lifting of path homotopies. 

\medskip

We remind that the space $X=\CC-P$ (we simplify the notation in this section by removing the reference to $\nu$ that is fixed once for all) 
can be endowed with the regular planar topology or with the monogenic topology, that is more natural for our purposes.
For both topologies, the space $X$ is path-connected, hence connected, separable, and locally path-connected, but 
not semilocally simply connected, not even locally simply connected. More precisely, each point $x_1\in G_1$ is the base point for a uncountable Hawaian earring as is easily seen 
by taking a family of circles $(C_{x_1,x_2})_{x_2\in G_1}$ cutting perpendicularly the real axes at $x_1, x_2 \in G_1$ (See Figure 5).

\begin{figure}[ht]
\centering
\resizebox{6cm}{!}{\includegraphics{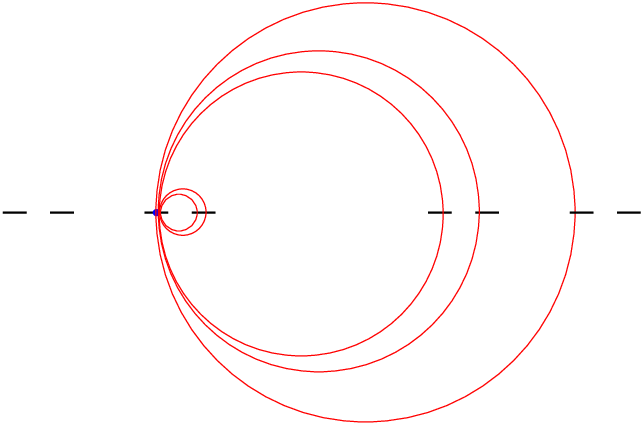}}    % name of the file - without extension
 \put (-137,48) {\scriptsize $x_1$}
  \put (-40,48) {\scriptsize $x_2$}
\caption{Hawaian earring based on $x_1$.} 
\end{figure}

A path $\gamma$ in $\CC-P$ is essential  or we say that $\gamma$ has an essential intersection at $x_1\in \gamma \cap G_1$ if the point $x_1$ does not 
disconnect $\gamma$ into two connected components, such that the closure of one is not a loop $\eta$ such that $\eta-\{x_1\} \subset \CC-K$ (see Figure 6).

\begin{figure}[ht]
\centering
\resizebox{6cm}{!}{\includegraphics{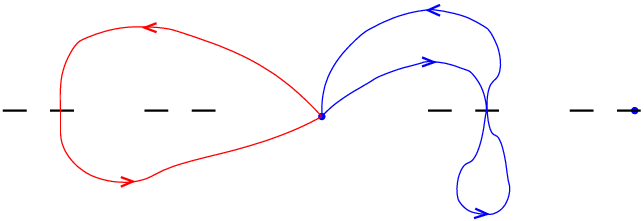}}    % name of the file - without extension
 \put (-115,50) {\scriptsize $\gamma$}
  \put (-85,48) {\scriptsize $\eta$}
\caption{An essential loop $\gamma$ and a non-essential one $\eta$.} 
\end{figure}

We observe that if $\gamma_1$ and $\gamma_2$ 
are two homotopic paths in $X$ with fixed end-points, $\gamma_1\sim \gamma_2$, and with essential intersections with $G_1$, then 
$$
\gamma_1 \cap G_1 =\gamma_2 \cap G_1 
$$
because all points of $G_1$ are inner points of the Cantor set $K$ that are ``squeezed'' between a left and right sequences of points in $P$.
For non-essential intersections at $x_1$, we could locally slide homotopically the path to leave the intersection at $x_1$.  
Hence, under these conditions, $\gamma_1$ and $\gamma_2$ are homotopic in $X$ if and only if they are homotopic relative to $G_1$. 
Any continuous path for the monogenic topology is also continuous or the regular topology, and Borel homotopic equivalence is equivalent to homotopic
equivalence for the regular topology.

We can define in general:

\begin{definition}
Let $X$ be a path connected topological space and $z_0\in X$ a base point. We define
$$
\tilde X=\{[\gamma] ; \gamma : [0,1]\to X \text{ continuous}, \, \gamma(0)=z_0 \}
$$
where $[\gamma]$ denotes the homotopy class of $\gamma$ fixing its end-points.

We endow $\tilde X$ with the topology generated by the open sets
$$
U_{[\gamma]} =\{ [\gamma.\eta]; \eta ([0,1]) \subset U \text{ loop with } \eta(0)=\eta(1)= \gamma(1) \} 
$$
where $U$ is an open set of $X$ and $\gamma$ is a path in $X$.

\end{definition}

The topological space constructed does not depend on the base point for a path connected space $X$. If $z'_0\in X$ is another base point, then 
a path $\gamma$ from $z_0$ to $z'_0$ defines a homeomorphism depending only on the homotopy class of $\gamma$, $h_{[\gamma]}: \tilde X(z_0)\to \tilde X(z'_0)$,
by
$$
h_{[\gamma]} ([\eta])=[\eta.\gamma]
$$

The topological space $\tilde X$ is path connected, and even locally path-connected by construction. 
We will see shortly that it is simply connected.

We use this definition for $X=\CC-P$ endowed with the richer monogenic topology. By the precedent remarks, we don't really need 
Borel's topology to define \textit{the set} $\tilde X$. But the topology defined 
by taking a bases with the $(U_{[\gamma]})$ when $U$ runs over monogenic open sets endows $\tilde X$ with a richer topology.

Observe that since the monogenic topology is generated by the log-Euclidean metric on $\CC-P$, we could have taken as basis of neighborhoods 
$$
V_{[\gamma]} (\eps)=\{ [\gamma.\eta]; \eta \text{ loop with } \eta(0)=\eta(1)= \gamma(1), {\hbox{\rm diam}} (\eta) < \eps \} 
$$
where ${\hbox{\rm diam}} (\eta)$ is the log-Euclidean diameter. This comes down to request that $d_H(\gamma, \gamma.\eta) <\eps$ for the 
Hausdorff distance.

\begin{proposition}
The projection mapping $\pi : \tilde X \to X$ defined by
$$
\pi([\gamma]) =\gamma (1)
$$
is a continuous map and has the path-lifting property, and unique path-lifting property.
\end{proposition}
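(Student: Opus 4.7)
The proof naturally splits into three parts: continuity of $\pi$, existence of path lifts, and uniqueness of lifts. The first two are essentially formal, while the third carries the real content.

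For continuity, given $[\gamma_0]\in\tilde X$ and an open neighborhood $V\subset X$ of $\pi([\gamma_0])=\gamma_0(1)$, I would use local path-connectedness of $X=\CC-P_\nu$ (valid both for the planar and the monogenic topology) to choose a path-connected open $U$ with $\gamma_0(1)\in U\subset V$. Then $U_{[\gamma_0]}$ is a basic open neighborhood of $[\gamma_0]$ in $\tilde X$, and every $[\gamma_0\cdot\eta]\in U_{[\gamma_0]}$ satisfies $\pi([\gamma_0\cdot\eta])=\eta(1)\in U\subset V$.

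For path-lifting, given $\alpha:[0,1]\to X$ with $\alpha(0)=\pi([\gamma_0])$, I would define the candidate lift explicitly by
$$
\tilde\alpha(t) := [\gamma_0 \cdot \alpha_t], \qquad \alpha_t(s):=\alpha(ts),
$$
so that $\pi\circ\tilde\alpha=\alpha$ by construction. Continuity at $t_0$ is then obtained by picking a path-connected open $U$ around $\alpha(t_0)$ and $\delta>0$ with $\alpha([t_0-\delta,t_0+\delta])\subset U$; for $|t-t_0|<\delta$ one factors $\alpha_t\simeq \alpha_{t_0}\cdot(\alpha|_{[t_0,t]})$ up to reparameterization, and the tail is a path in $U$ starting at $(\gamma_0\cdot\alpha_{t_0})(1)$, so $\tilde\alpha(t)\in U_{[\gamma_0\cdot\alpha_{t_0}]}$.

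Unique path-lifting is the main obstacle. Let $\tilde\alpha_1,\tilde\alpha_2$ be two continuous lifts of $\alpha$ with $\tilde\alpha_1(0)=\tilde\alpha_2(0)$ and set $T:=\{t\in[0,1]:\tilde\alpha_1(t)=\tilde\alpha_2(t)\}$; the aim is to show $T$ is clopen, hence $T=[0,1]$. Openness at $t_0\in T$ with common value $[\beta]$: both lifts enter a common basic neighborhood $U_{[\beta]}$ for $t$ near $t_0$, so $\tilde\alpha_i(t)=[\beta\cdot\eta_i(t)]$ with $\eta_i(t)\subset U$ sharing endpoints $\alpha(t_0)$ and $\alpha(t)$, and one needs $[\beta\cdot\eta_1(t)]=[\beta\cdot\eta_2(t)]$ in $\tilde X$. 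When $\alpha(t_0)\notin G_1$ one shrinks $U$ to a neighborhood in which every loop is null-homotopic in $X$, which forces the equality. The delicate case, and the expected main difficulty, is $\alpha(t_0)\in G_1$, where $X$ fails to be semilocally simply connected and one cannot simplify by shrinking $U$; instead one invokes the explicit parameterization of the fiber $\pi^{-1}(\alpha(t_0))$ by Borel monodromy values from Theorem \ref{th:Borel_monodromy} together with the sheet-pasting rules of Definition \ref{def:Cantor_Riemannnium}, which pin down each lift uniquely from the transverse direction of approach of $\alpha$ at $t_0$. Closedness of $T$ reduces to the analogous Hausdorff-type separation of distinct classes with the same projection, handled by the same basic opens and the same dichotomy between points in $G_1$ and outside.
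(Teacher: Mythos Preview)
Your treatment of continuity and of the existence of lifts is correct and is exactly the standard argument (the paper's own ``proof'' is the single sentence ``almost tautologically from the construction of $\tilde X$'', so you are supplying the details it omits).

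The problem is your uniqueness argument at points of $G_1$. You write that one ``invokes the explicit parameterization of the fiber $\pi^{-1}(\alpha(t_0))$ by Borel monodromy values from Theorem~\ref{th:Borel_monodromy} together with the sheet-pasting rules of Definition~\ref{def:Cantor_Riemannnium}''. This conflates $\tilde X$ with the Cantor Riemannium. The fibre of $\pi:\tilde X\to X$ over any point is in bijection with $\pi_1(X,z_0)$, an enormous free group; it is \emph{not} parameterized by the Borel monodromy values $B_\nu\subset\RR$. The monodromy map $\rho:\pi_1(X,z_0)\to\RR$ has a huge kernel $H_0$, and it is only the quotient $X_{H_0}=\tilde X/H_0$ (the Cantor Riemannium, defined \emph{after} this proposition using it) whose fibres are labelled by monodromy. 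Definition~\ref{def:Cantor_Riemannnium} describes yet another quotient, the \emph{restricted} Riemannium. None of this structure is available on $\tilde X$ itself, so your proposed mechanism for pinning down the lift does not apply here. Moreover, even if two paths $\eta_1,\eta_2\subset U$ share endpoints and give the same monodromy contribution, they can represent different elements of $\pi_1(X)$, so distinct points of $U_{[\beta]}$; ``same monodromy'' would at best prove uniqueness for the Riemannium projection, not for $\pi$.

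There is also a structural issue with the clopen strategy at $G_1$ points: since $\pi|_{U_{[\beta]}}$ is genuinely non-injective there, knowing $\tilde\alpha_1(t_0)=\tilde\alpha_2(t_0)=[\beta]$ and $\tilde\alpha_i(t)\in U_{[\beta]}$ does \emph{not} force $\tilde\alpha_1(t)=\tilde\alpha_2(t)$, so openness of $T$ is not a local statement. What actually makes uniqueness work is that $X=\CC-P_\nu$ is a planar set and hence homotopically Hausdorff: every nontrivial class in $\pi_1(X,x_1)$ fails to be represented by a loop in some small neighbourhood of $x_1$. From this one shows that $\tilde X$ is Hausdorff and, by the chaining argument you began (comparing an arbitrary lift to the canonical lift $t\mapsto[\gamma_0\cdot\alpha_t]$ along a fine partition), that the ``defect'' loop $[\beta_t\cdot\overline{\alpha_t}\cdot\overline{\gamma_0}]$ lies in $\bigcap_U\operatorname{im}\pi_1(U,x_1)=\{e\}$, forcing equality. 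The paper's ``almost tautologically'' sweeps this under the rug; your instinct that $G_1$ is where the difficulty lives is right, but the fix is the homotopically Hausdorff property of planar sets, not the Riemannium data.
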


The path lifting and unique path-lifting property follows almost tautologically from the construction of $\tilde X$.

\begin{corollary}
The space $\tilde X$ is simply connected, i.e. any loop is contractible. 
\end{corollary}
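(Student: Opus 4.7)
The plan is to mimic the classical proof that a universal cover is simply connected, adapted to this path-lifting setting. Take any loop $\tilde\alpha:[0,1]\to\tilde X$ based at $\tilde z_0=[c_{z_0}]$ (where $c_{z_0}$ is the constant path at $z_0$), and set $\alpha=\pi\circ\tilde\alpha$. The map $s\mapsto[\alpha|_{[0,s]}]$ is a lift of $\alpha$ starting at $\tilde z_0$ (continuity coming from the very definition of the topology on $\tilde X$), so by the unique path-lifting property of the preceding proposition $\tilde\alpha(s)=[\alpha|_{[0,s]}]$. The closing condition $\tilde\alpha(1)=\tilde z_0$ then reads $[\alpha]=[c_{z_0}]$ in $\pi_1(X,z_0)$: the loop $\alpha$ is null-homotopic in $X$.

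Next, pick a null-homotopy $H:[0,1]^2\to X$ with $H(\cdot,0)=\alpha$, $H(\cdot,1)\equiv z_0$ and $H(0,t)=H(1,t)=z_0$ for every $t$, and propose as the contraction of $\tilde\alpha$ the map
$$
\tilde H(s,t)=\bigl[H(\cdot,t)\big|_{[0,s]}\bigr]\in\tilde X.
$$
The four boundary identities are immediate: $\tilde H(s,0)=\tilde\alpha(s)$ by the previous paragraph; $\tilde H(s,1)$ and $\tilde H(0,t)$ equal $[c_{z_0}]=\tilde z_0$; and $\tilde H(1,t)=[H(\cdot,t)]=\tilde z_0$ because $H|_{[0,1]\times[t,1]}$ is itself a null-homotopy of the loop $H(\cdot,t)$.

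The one real content of the proof, and the step I expect to be the main obstacle, is the joint continuity of $\tilde H$ at an arbitrary $(s_0,t_0)$; here the topology on $\tilde X$ is forced to be subtle by the failure of $X$ to be semilocally simply connected, so one cannot invoke a classical covering argument. Given a basic neighborhood of $\tilde H(s_0,t_0)$ coming from an open set $U\ni H(s_0,t_0)$ in $X$, use continuity of $H$ to choose a small convex rectangle $W\subset[0,1]^2$ around $(s_0,t_0)$ with $H(W)\subseteq U$. For $(s,t)\in W$, let $\eta_{s,t}$ be the $H$-image of the straight segment from $(s_0,t_0)$ to $(s,t)$; this is a path in $U$ from $H(s_0,t_0)$ to $H(s,t)$. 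The two paths $H(\cdot,t)|_{[0,s]}$ and $H(\cdot,t_0)|_{[0,s_0]}\cdot\eta_{s,t}$ are both $H$-images of paths in $[0,1]^2$ going from the left edge $\{0\}\times[0,1]$ (collapsed by $H$ to $z_0$) to the point $(s,t)$; the straight-line interpolation between these two planar paths, whose initial point slides along the left edge, composes with $H$ to give a homotopy in $X$ rel endpoints $\{z_0,H(s,t)\}$. Therefore $\tilde H(s,t)$ lies in the prescribed neighborhood of $\tilde H(s_0,t_0)$, so $\tilde H$ is continuous and provides the desired contraction of $\tilde\alpha$.
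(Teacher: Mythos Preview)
Your proof is correct and follows the same approach the paper points to: reduce to a loop based at $\tilde z_0$, use unique path lifting to identify $\tilde\alpha(s)=[\alpha|_{[0,s]}]$, deduce that $\alpha$ is null-homotopic, and then lift the null-homotopy explicitly. The paper's one-line justification (``carry out the same proof as in \cite{Ha} p.65; the homotopy to the null loop is obtained by using the parametrization'') is exactly what you unpack, and your careful verification of the joint continuity of $\tilde H$ via the convex-interpolation argument in $[0,1]^2$ is the right way to handle the point where the non--semilocally-simply-connected topology on $\tilde X$ could cause trouble.
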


This result is independent of the base point, hence we can take a loop starting and ending at $z_0$ and carry out the same proof as in \cite{Ha} p.65.
The homotopy to the null loop is obtained by using the parametrization.

The projection  $\pi$ makes of $\tilde X$ a $X$-space in the sense of Bourbaki (\cite{Bou}, p.1). In the non-semilocally simply connected situation the projection 
$\pi$ is not in general a local homeomorphism. In general, we have:

\begin{proposition}
For an open set $U\subset X$, and $[\gamma]\in \tilde X$, with $\gamma(1)\in U$, we have that $\pi: U_{[\gamma]} \to U$ is continuous and surjective.  
\end{proposition}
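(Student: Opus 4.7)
The first step is to read the definition of $U_{[\gamma]}$ in the only way that makes the statement nontrivial: $\eta$ must be a \emph{path} in $U$ with $\eta(0)=\gamma(1)$, not a loop based at $\gamma(1)$ (the latter reading would force $\pi(U_{[\gamma]})=\{\gamma(1)\}$ and make the asserted surjectivity onto $U$ vacuous). With this reading, the proposition is a direct unwinding of the definitions, in the spirit of Hatcher's construction of the universal cover. I would also reduce at the outset to the case where $U$ is path-connected and contains $\gamma(1)$: the monogenic topology on $X=\CC-P$ is generated by the log-euclidean length metric, hence is locally path-connected, and the path-component of $\gamma(1)$ in $U$ is itself open.

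For surjectivity, given any $y\in U$, pick a path $\eta\colon[0,1]\to U$ from $\gamma(1)$ to $y$ (possible by path-connectedness of $U$), form the concatenation $\gamma\cdot\eta$, and observe that $[\gamma\cdot\eta]\in U_{[\gamma]}$ by construction, with $\pi([\gamma\cdot\eta])=y$.

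For continuity, fix $[\alpha]\in U_{[\gamma]}$ with representative $\alpha=\gamma\cdot\eta_0$, and let $V\subset X$ be any open neighborhood of $\pi([\alpha])=\eta_0(1)$. Using local path-connectedness, I shrink $V$ so that $V\subset U$ and $V$ is path-connected. The basic neighborhood $V_{[\alpha]}$ then contains $[\alpha]$ (take $\zeta$ to be the constant path at $\eta_0(1)$) and projects into $V$ by the very definition of $V_{[\alpha]}$. Moreover $V_{[\alpha]}\subset U_{[\gamma]}$, since any representative $\alpha\cdot\zeta$ may be rewritten as $\gamma\cdot(\eta_0\cdot\zeta)$ where $\eta_0\cdot\zeta$ is a path in $U$ starting at $\gamma(1)$ (here one uses both $\eta_0\subset U$ and $\zeta\subset V\subset U$). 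Thus $V_{[\alpha]}$ is an open neighborhood of $[\alpha]$ inside $U_{[\gamma]}$ whose image under $\pi$ lies in $V$, giving continuity.

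The only point requiring care is the simultaneous use of the nesting $V\subset U$, once to guarantee $V_{[\alpha]}\subset U_{[\gamma]}$ and once to control $\pi(V_{[\alpha]})\subset V$; both rely on concatenating paths within $U$. This is really a bookkeeping observation rather than a genuine mathematical obstacle: no semilocal simple connectivity of $X$ is required for the statement, which is precisely why continuity and surjectivity of $\pi\colon U_{[\gamma]}\to U$ survive at points of $G_1$, where the paper has already warned that $\pi$ fails to be a local homeomorphism. The harder questions about local structure of $\pi$ over $G_1$ lie beyond the scope of this proposition.
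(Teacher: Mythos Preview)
Your proof is correct, and there is nothing in the paper to compare it against: the proposition is stated without proof. Your main contribution is the observation that the paper's definition of $U_{[\gamma]}$ contains a slip --- as written, $\eta$ is required to be a \emph{loop} at $\gamma(1)$, which would force $\pi(U_{[\gamma]})=\{\gamma(1)\}$ and make surjectivity onto $U$ false. Your corrected reading (that $\eta$ is a path in $U$ with $\eta(0)=\gamma(1)$) is the standard one from Hatcher's construction, which the paper explicitly invokes, and is the only reading under which both this proposition and the subsequent path-lifting proposition make sense. With that correction in hand, your surjectivity and continuity arguments are the expected ones and are carried out cleanly; the reduction to path-connected $U$ via local path-connectedness of the monogenic topology is the right way to handle the one loose end.
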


In our particular situation, if $U\cap G_1\not= \emptyset$ then  we have that $\pi: V\to U$ has infinite degree (even uncountable, as we have an 
uncountable number of pre-images generated by the uncountable Hawaian earrings). We say 
that $\pi$ has local infinite degree at $\tilde x_1\in p^{-1}(\{x_1\})$.

We are particularly interested in our particular situation in the case when $X$ is a Gromov length space.

\begin{proposition}
If $X$ is a Gromov length space, then  $\tilde X$ is also a Gromov length space and $\pi$ is a contraction.
\end{proposition}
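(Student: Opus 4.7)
The plan is to transfer the length structure of $X$ to $\tilde X$ via the projection $\pi$, exploiting the path-lifting property established in the previous proposition. For any continuous path $\tilde\alpha:[0,1]\to \tilde X$, set $\tilde L(\tilde\alpha):=L_X(\pi\circ\tilde\alpha)$, where $L_X$ is the length functional on $X$. Then define
$$
\tilde d([\gamma_1],[\gamma_2]) \; :=\; \inf_{\tilde \alpha}\, \tilde L(\tilde\alpha),
$$
the infimum ranging over continuous paths $\tilde\alpha$ in $\tilde X$ joining $[\gamma_1]$ to $[\gamma_2]$. Since $\tilde X$ is path-connected (by construction) this infimum is always finite.

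First I would check that $\tilde d$ satisfies symmetry and the triangle inequality: these follow immediately from path reversal and concatenation, which are well defined in $\tilde X$ because every path in $X$ starting from $\pi([\gamma])$ lifts uniquely to one starting from $[\gamma]$, and the length $L_X$ is invariant under reversal and additive under concatenation. The contraction property of $\pi$ is then immediate: for any lift $\tilde\alpha$ of an $X$-path joining $\pi(\tilde x)$ to $\pi(\tilde y)$ one has $L_X(\pi\circ\tilde\alpha)=\tilde L(\tilde\alpha)$, so passing to the infimum gives $d_X(\pi(\tilde x),\pi(\tilde y))\le \tilde d(\tilde x,\tilde y)$. The Gromov length space axiom for $(\tilde X,\tilde d)$ is then automatic, since by construction the metric is defined as the infimum of lengths of connecting paths.

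The remaining and main obstacle is to establish positive-definiteness of $\tilde d$, i.e. that $[\gamma_1]\neq[\gamma_2]$ forces $\tilde d([\gamma_1],[\gamma_2])>0$. If $\pi([\gamma_1])\neq \pi([\gamma_2])$ this is inherited from $d_X$ via the contraction inequality just noted. Otherwise any connecting path in $\tilde X$ projects to a loop $\eta$ at $z:=\pi([\gamma_1])=\pi([\gamma_2])$ in $X$ whose lift fails to close up. I would argue by contradiction: a sequence of connecting paths $(\tilde\alpha_n)$ with $\tilde L(\tilde\alpha_n)\to 0$ produces loops $\eta_n=\pi\circ\tilde\alpha_n$ based at $z$ of $X$-length, and hence $X$-diameter, tending to zero. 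Comparing with the neighborhood basis $V_{[\gamma_1]}(\e)$ of the topology on $\tilde X$ introduced above, the lifted loops would lie in arbitrarily small $V_{[\gamma_1]}(\e)$, and by the unique path-lifting property their terminal points would coincide with $[\gamma_1]$ for $n$ large, contradicting $[\gamma_2]\neq [\gamma_1]$. This is the delicate point because $X=\CC-P_\nu$ is not locally simply connected at points of $G_1$; the argument works nonetheless because unique lifting in $\tilde X$ converts smallness of the base loop into smallness of the lifted loop, independently of local simple connectivity. Once positivity is settled, the length space and contraction properties claimed in the proposition are built into the construction.
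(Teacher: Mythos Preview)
Your approach---pulling back the length structure via path-lifting---is exactly the paper's, whose entire proof is the single sentence ``The path-lifting property allows to pull-back the Gromov length distance into a Gromov length distance on $\tilde X$.'' You have gone further by isolating positive-definiteness as the delicate point, which the paper does not address at all.

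The argument you offer for it does not close, however. You write that since the lifts $\tilde\alpha_n$ lie in arbitrarily small $V_{[\gamma_1]}(\e)$, ``by the unique path-lifting property their terminal points would coincide with $[\gamma_1]$ for $n$ large.'' But unique path-lifting only tells you that the lift of $\eta_n$ starting at $[\gamma_1]$ is \emph{the} lift; its terminal point is $[\gamma_1\cdot\eta_n]=[\gamma_2]$ by construction, not $[\gamma_1]$. What your argument actually establishes is that $[\gamma_2]\in\bigcap_{\e>0}V_{[\gamma_1]}(\e)$, i.e.\ that the fixed non-trivial homotopy class $[\gamma_1^{-1}\gamma_2]$ at $z$ contains loops of arbitrarily small diameter. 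Concluding $[\gamma_1]=[\gamma_2]$ from this is precisely the step that is \emph{not} automatic when $X$ fails to be semilocally simply connected, and unique lifting does not supply it; your sentence ``unique lifting in $\tilde X$ converts smallness of the base loop into smallness of the lifted loop'' is true but does not force the lifted path to be a loop. In the concrete case $X=\CC-P_\nu$ one can rescue this with extra input (for instance the monodromy distinguishes small loops), but as a general statement about length spaces the step needs more---and the paper, for its part, simply does not engage with it.
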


\begin{proof}
The path-lifting property allows to pull-back the Gromov length distance into a Gromov length distance on $\tilde X$.
\end{proof}

Also, as in the classical situation, the group $\pi_1(X,z_0)$ acts on $\tilde X$. For a loop $\gamma$, starting and ending at $z_0$, the map 
$h_{[\gamma]}$ is a homeomorphism of $\tilde X$ that respects the fibers of $\pi$, so $\pi_1(X,z_0)$ acts on fibers. When we endow 
$\pi_1(X,z_0)$ with Bourbaki's admissible topology (\cite{Bou}, III, p.315) it becomes a topological group and this action is continuous 
and faithful on the fiber $\pi^{-1}(z_0)$.

Observe also that the fibers of $\pi$ are all homeomorphic. For another point $z_1\in X$, consider a path $\gamma$ from the base point $z_0$ to 
$z_1$. Then $h_{[\gamma]}$ gives an homeomorphism from $\pi^{-1}(z_0)$ to $\pi^{-1}(z_1)$.

We observe that in our particular situation, $\pi_1(X,z_0)$ is not generated by a countable bases, as show the circles in a local uncountable Hawaian earrings mentioned 
before.
 
In the classical theory, for $X$ semi-locally simply connected, to each subgroup $H$ of $\pi_1(X,z_0)$ it corresponds a quotient $X_H=\tilde X/\mathord\sim$ and 
a base point $\tilde z_0 \in X_H$,
such that for the resulting covering $\pi_H: X_H\to X$, $(\pi_H)_*(\pi_1(X_H,\tilde z_0))=H$. We have the same type of result in our general situation:

\begin{proposition}\label{prop:subgroup}
Let $X$ be path connected and locally path-connected. For every subgroup $H\subset \pi_1(X,z_0)$ there is an equivalence relation $\sim$ 
on $\tilde X$ respecting fibers of $\pi$, such that for the  quotient $X_H=\tilde X/\mathord\sim$ and the resulting projection $\pi_H:X_H\to X$, there is a base 
point $\tilde z_0\in X_H$, $\pi_H (\tilde z_{0})=z_0$, such that
$$
\left (\pi_{H}\right )_*(\pi_1(X_H,\tilde z_0))=H
$$
\end{proposition}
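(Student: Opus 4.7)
The plan is to imitate the classical construction of the intermediate cover $X_H$ corresponding to a subgroup $H$, but replacing the standard reliance on semilocal simple connectedness by the path-lifting and unique path-lifting properties of $\pi:\tilde X\to X$ that have already been established.

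First, I would define the equivalence relation on $\tilde X$ by declaring $[\gamma]\sim[\delta]$ if and only if $\gamma(1)=\delta(1)$ and $[\gamma\cdot\bar\delta]\in H$, where $\bar\delta$ is the reverse path. Reflexivity comes from $e\in H$, symmetry from $H$ being closed under inversion, and transitivity from closure under composition: the homotopy identity $[\gamma\cdot\bar\delta]\cdot[\delta\cdot\bar\eta]=[\gamma\cdot\bar\eta]$ shows the product of two $H$-elements is in $H$, hence in the relation. By construction $\sim$ refines the partition into $\pi$-fibers, so $\pi$ descends to a well-defined continuous map $\pi_H:X_H=\tilde X/{\sim}\to X$ on the quotient; let $q:\tilde X\to X_H$ be the quotient map and set $\tilde z_0=q([c_{z_0}])$, where $c_{z_0}$ is the constant loop at $z_0$.

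Second, I would verify that $\pi_H$ inherits path-lifting and unique path-lifting from $\pi$. Given a path $\alpha$ in $X$ with $\alpha(0)=z_0$, the canonical lift $\tilde\alpha(t)=[\alpha|_{[0,t]}]$ lives in $\tilde X$ with $\tilde\alpha(0)=[c_{z_0}]$, and $q\circ\tilde\alpha$ is a lift of $\alpha$ to $X_H$ based at $\tilde z_0$. For uniqueness of lifts of $\pi_H$ starting at $\tilde z_0$: because $\sim$ only identifies points within a single $\pi$-fiber, comparing any two such lifts fiberwise via the unique path-lifting property of $\pi$ forces them to coincide.

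Third, I would compute $(\pi_H)_*\pi_1(X_H,\tilde z_0)$. For the inclusion in $H$, take a loop $\tilde\beta$ in $X_H$ at $\tilde z_0$ and set $\gamma=\pi_H\circ\tilde\beta$. By uniqueness of $\pi_H$-lifts, $\tilde\beta=q\circ\tilde\gamma$ where $\tilde\gamma(t)=[\gamma|_{[0,t]}]$ is the canonical lift in $\tilde X$. The loop condition $\tilde\beta(1)=\tilde z_0$ forces $\tilde\gamma(1)=[\gamma]\sim[c_{z_0}]$, which by definition of $\sim$ means $[\gamma]\in H$. For the reverse inclusion, given $[\gamma]\in H$, the canonical lift $\tilde\gamma$ has endpoint $[\gamma]$ with $[\gamma]\sim[c_{z_0}]$, so $q\circ\tilde\gamma$ is a loop in $X_H$ at $\tilde z_0$ whose $(\pi_H)_*$-image is $[\gamma]$.

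The main obstacle is the unique path-lifting of $\pi_H$ in step two: classically this is immediate from $\pi_H$ being a covering, but here $q$ is not a local homeomorphism and $\pi_H$ may have uncountable local degree at fibers over $G_1$. The argument has to exploit that $\sim$ is fiberwise with respect to $\pi$, so that lifting ambiguities in $X_H$ are controlled by the (already proven) rigid lifting theory of $\tilde X\to X$. Once this is settled, injectivity of $(\pi_H)_*$ is not required by the statement, and the image calculation is essentially formal.
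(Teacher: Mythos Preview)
Your proposal follows the same line as the paper: same equivalence relation $[\gamma]\sim[\delta]\iff\gamma(1)=\delta(1)$ and $[\gamma\cdot\bar\delta]\in H$, same base point (class of the constant path), and the same canonical-lift computation of the image of $(\pi_H)_*$.

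The one substantive difference is how the crux---your step two---is handled. Your claim that unique path-lifting for $\pi_H$ follows ``fiberwise via the unique path-lifting property of $\pi$'' is not a proof: the two lifts $\beta_1,\beta_2$ to be compared live in $X_H$, not in $\tilde X$, and you cannot invoke unique lifting for $\pi$ until you have first lifted them through $q:\tilde X\to X_H$; but path-lifting for $q$ is exactly what is in question, and $q$ is not a local homeomorphism here. The paper sidesteps this by proving a \emph{saturation} property: from $[\gamma]\sim[\gamma']$ one gets $[\gamma\cdot\eta]\sim[\gamma'\cdot\eta]$ for every path $\eta$ starting at $\gamma(1)$, so whenever two basic neighborhoods $U_{[\gamma]}$, $U_{[\gamma']}$ share a single identified pair, the whole neighborhoods are identified by $\sim$. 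This makes $q$ restricted to each $U_{[\gamma]}$ a bijection onto its image $U_{[\gamma]}/\mathord\sim$, and these sets form a basis for $X_H$. With that local structure in hand, the Hatcher-style argument (Prop.~1.36) runs verbatim without any semilocal simple connectedness or covering hypothesis. You should replace your step two by this saturation argument; once it is in place, your step three goes through as written.
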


\begin{proof}
The proof is the same as Proposition 1.36 in \cite{Ha}. For $[\gamma],  [\gamma'] \in \tilde X$, we define $[\gamma]\sim [\gamma']$ if and only if
$\gamma(1)=\gamma'(1)$ and  $[\gamma.(\gamma')^{-1}] \in H$. This is an equivalence relation because $H$ is a subgroup. Note also that if 
$\gamma(1)=\gamma'(1)$ we have $[\gamma]\sim [\gamma']$ if and only if $[\gamma.\eta]\sim [\gamma'.\eta]$ for any loop $\eta$ with $\eta(0)=\eta(1)=\gamma(1)=\gamma'(1)$.
So, if two points in $U_{[\gamma]}$ and $U_{[\gamma']}$ are identified, then the whole neighborhoods $U_{[\gamma]}$ and $U_{[\gamma']}$ are identified by 
the equivalence relation and the open sets $U_{[\gamma]}/\mathord\sim$ are a bases for the topology of the quotient $X_H$. The equivalence relation respects fibers 
of $\pi$ since if $[\gamma]\sim [\gamma']$ then $\gamma(1)=\gamma'(1)$, thus $\pi_H:X_H\to X$ defined by $\pi_H(\overline{[\gamma]}) =\pi ([\gamma])$ 
is a well defined continuous projection. 
Let $\tilde z_0 \in \tilde X$ be the point corresponding to the class $[z_0]$ of the constant path in $X$ equal to the base point $z_0$. Choose for the 
base point in $X_H$, the image of $\tilde z_0$ by the quotient (that we still denote in the same way) . Now, the 
image $\left (\pi_{H}\right )_*(\pi_1(X_H,\tilde z_0))$ is exactly $H$, because a loop
$\gamma$ in $X$ based at $z_0$ lifts to $\tilde X$ into a path starting at $[z_0]$ and ending at $[\gamma]$, thus the image of this lifted path in $X_H$ is a 
loop if and only if $[\gamma]\sim [z_0]$, or $[\gamma]\in H$.
\end{proof}

\subsection{Construction of the Cantor Riemannium.}

After these preliminaries of monogenic algebraic topology, the construction of the 
Cantor Riemannium is straightforward.
We consider $X=\CC-P$ and we set $z_0=0$ as base point.
We have different relevant subgroups of $\pi_1(X,0)=\pi_1(X)$.

We have a representation of $\pi_1(X)$ in the group of translation of $\CC$, $\rho: \pi_1(X) \to \CC$ given by 
$$
[\gamma] \mapsto \int_{\gamma} F_{\nu} (z) \, dz 
$$
The image $B_\nu=\rho(\pi_1(X))$ is an additive  subgroup of $\RR$ which represents the possible values of the different branches at $0$, i.e.
$$
B_\nu=\BMon_{z=0} F_{\nu}
$$

Let $H_0= \Ker \rho$ be the kernel of this representation. We have that 
$H_0 \triangleleft \pi_1(X)$ is a normal closed subgroup of $\pi_1(X)$.
Hence, the equivalence relation associated to $H_0$ is closed and the quotient $X_{H_0}$ is separated. 
% We have that $\pi_1(X,0)$ acts faithfully on $G/H$. So we have a double action of $\pi_1(X,0)$ on $\tilde X\times G/H$. 

\begin{definition}
The Cantor Riemannium $\check C_{\nu}$ is the quotient space $X_{H_0}=\tilde X/\mathord\sim$ corresponding 
to the subgroup $H_0$. 
\end{definition}

By construction we get the important caracterization of the Cantor Riemannium:

\begin{theorem}
The Cantor Riemannium  $\check C_{\nu}$ is the minimal space 
where the Borel extension of $F_{\nu}$ is finite and single-valued.
\end{theorem}

Now we explore how to get other quotient spaces by taking other subgroups and identifie their quotient.
We consider the subgroup $H_1=\pi_1^r(X) \subset \pi_1(X)$ to be the subgroup 
of loops in $\CC-P$ which have an essential intersection with $G_1$ (we add the null loop also to 
have a subgroup). This subgroup is closed. 

\begin{proposition}
The quotient space $X_{H_1}=\tilde X/\mathord\sim$ corresponding to the subgroup $H_1$ is homeomorphic to $\CC-P\approx \cS^*_{\nu}$. 
\end{proposition}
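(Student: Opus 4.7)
The plan is to show that $\pi_{H_1}: X_{H_1} \to X$ is a homeomorphism, from which the conclusion follows via the identification $F_{\nu}^{-1}: \CC - P_{\nu} \to \cS^*_{\nu}$. By the construction of $X_H$ in Proposition \ref{prop:subgroup}, the relation $[\gamma] \sim [\gamma']$ on $\tilde X$ identifies all points sharing a fiber of $\pi$ precisely when $H = \pi_1(X, 0)$. Thus the proposition reduces to the identity $H_1 = \pi_1(X, 0)$, and once this is known, continuity and openness of $\pi_{H_1}$ follow from the fact that the basis $\{U_{[\gamma]}/\mathord\sim\}$ of $X_{H_1}$ projects bijectively onto the monogenic open basis $\{U\}$ of $X$.

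To prove $H_1 = \pi_1(X, 0)$, I would verify that every loop $\gamma$ in $X$ based at $0$ is homotopic, within $X$, to a loop whose intersections with $G_1$ are all essential (possibly none, in which case the loop sits in $\CC - K$ and is vacuously in $H_1$). This is the observation already indicated in the text: at each non-essential intersection $x_1 \in \gamma \cap G_1$, by definition one of the components of $\gamma \setminus \{x_1\}$ closes up into a lollipop loop $\eta$ based at $x_1$ with $\eta - \{x_1\} \subset \CC - K$, approaching and leaving $x_1$ from the same side of $\RR$ (otherwise the crossing would be essential). A small local homotopy in a disk around $x_1$ intersected with $\CC - P_{\nu}$ — which preserves the side of $\RR$ where $\eta$ lives, and thus meets $P_{\nu} \subset \RR$ only trivially — displaces the vertex slightly off $\RR$, yielding a homotopic loop which avoids $x_1$. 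After such a modification the number of non-essential intersections strictly decreases.

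The main obstacle is that $\gamma \cap G_1$ may contain infinitely many, even Cantor-like accumulating, non-essential intersections, in which case a finite induction does not suffice. In that situation one must arrange the local deformations in a sequence with diameters tending to zero, using the log-Euclidean length space structure of $X$ together with the fact that each non-trivial lollipop winds around a positive $\nu$-mass of $K$ (so only finitely many can exceed any fixed size in the monogenic metric). The cumulative homotopy is then a uniform limit of finite-step homotopies and remains continuous into $X$. Once all non-essential intersections have been removed, the resulting loop represents an element of $H_1$, yielding $H_1 = \pi_1(X, 0)$ and hence the stated homeomorphism.
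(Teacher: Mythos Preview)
The paper states this proposition without proof, so there is nothing to compare your argument against directly. Your reduction is correct: by the construction in Proposition~\ref{prop:subgroup}, the equivalence relation collapses each $\pi$-fiber to a point precisely when $H=\pi_1(X,0)$, so the statement is equivalent to $H_1=\pi_1(X,0)$, and your observation that the basic open sets $U_{[\gamma]}/\mathord\sim$ then biject onto the monogenic basis of $X$ gives the homeomorphism.

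Your main argument---that every loop in $\CC-P$ is homotopic to one whose intersections with $G_1$ are all essential---is the natural one and follows the paper's own remark that non-essential intersections can be slid off by a local homotopy. For finitely many non-essential intersections this is immediate. For the accumulating case your sketch is in the right spirit but the justification is not quite right: a non-essential lollipop $\eta$ with $\eta-\{x_1\}\subset\CC-K$ need not enclose any $\nu$-mass at all (it may be null-homotopic in $\CC-K$), so the finiteness-from-mass argument does not apply as stated. What actually controls the sizes of the local pushes is simply the parametrisation of $\gamma$: the non-essential intersection points form a closed subset of $[0,1]$, and at each such point the two local branches of $\gamma$ lie on the same side of $\RR$, so one can choose the displacement into that half-plane with amplitude governed by the distance to the nearest essential intersection. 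Arranged this way the local homotopies are disjoint in parameter and their amplitudes tend to zero at accumulation points, so the simultaneous homotopy is continuous. With this correction your argument goes through.
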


Now, let $H_2 \subset \pi_1(X)$ be the subgroup of loops in $\CC-P$ which have an essential 
finite non-empty intersection with $G_1$ (we add the null loop also to 
have a subgroup). We check that we have a subgroup, and that this subgroup is not closed. 

\begin{proposition}
The quotient space $X_{H_2}=\tilde X/\mathord\sim$ corresponding to the subgroup $H_2$ is the 
restricted Cantor Riemannium $\tilde C^r_{\nu}$. 
\end{proposition}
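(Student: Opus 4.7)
My plan is to exhibit a fiber-preserving homeomorphism $\Phi : X_{H_2} \to \tilde C^r_{\nu}$ by sending an equivalence class of paths from $z_0=0$ to the sheet-point it reaches in the concrete sheet-pasting model of Definition \ref{def:Cantor_Riemannnium}. The guiding principle is that an element of $\tilde X$ is represented by a path $\gamma$ in $X=\CC-P_\nu$, and after homotoping transversally to $\RR$ (using the remark that homotopies in $X$ can be arranged to leave essential intersections with $G_1$ fixed), the finite ordered list of essential crossings $(\theta_1,\ldots,\theta_n)\in\sigma^{-1}(G_1)^n$ together with the final sheet position specifies exactly a point of $\check C^r_{\nu}$ — and the additional quotient by the dyadic/Weierstrass monodromy then gives $\tilde C^r_{\nu}$.

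First I would restrict attention to path classes $[\gamma]\in \tilde X$ admitting a representative with finite essential $G_1$-intersection; call the set of such classes $\tilde X^r\subset \tilde X$. The key observation, already used in the section, is that two homotopic paths with essential intersections share the same essential intersection set with $G_1$, so this property is well-defined on homotopy classes. I would then define $\Phi([\gamma])$ by following the representative through sheets $\cS^*_\nu(\emptyset),\cS^*_\nu(\boldsymbol\theta_1),\ldots,\cS^*_\nu(\boldsymbol\theta_n)$, crossing at the tip-fingers determined by $\theta_1,\ldots,\theta_n$, and recording the endpoint inside the last sheet. Independence of the chosen representative follows because any homotopy fixing the essential crossings lifts to each sheet copy, which is itself a (completed) tube-log Riemann surface where paths with fixed endpoints determine unique points.

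Next I would identify the $H_2$-equivalence with the two identifications used in Definition \ref{def:Cantor_Riemannnium}: (i) the identification along tip-fingers corresponds precisely to homotopy classes of loops that are null-homotopic in $X$ once one collapses essential crossings, and (ii) the final identification (\ref{eq:modulo_mon}) modulo $\bigoplus_{k\geq 0} 2^{-k}\lambda_k\ZZ$ corresponds to composing with a loop disjoint from $G_1$, which is exactly an element of $H_2$ with empty essential intersection (hence a Weierstrass-monodromy loop enclosing finitely many Cantor pieces, cf.\ Proposition \ref{prop:W-mono}). Combining (i) and (ii), the classes in $H_2$ account precisely for those path-concatenations that preserve the sheet-data in $\tilde C^r_{\nu}$. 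This will give that $\Phi$ descends to a well-defined bijection on $\tilde X^r/\mathord\sim_{H_2}$, and that the complement $\tilde X\setminus \tilde X^r$ maps nowhere (its classes are exactly those one \emph{cannot} realize in $\tilde C^r_{\nu}$, which is why the Riemannium is restricted; this matches the non-liftability discussion following the wiggling path of Figure 4).

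Finally I would check the topology. Using the Bourbaki-style basis $U_{[\gamma]}$, an open monogenic ball $U\subset X$ around $\gamma(1)$ disjoint from $G_1$ lifts homeomorphically into any sheet of $\tilde C^r_{\nu}$, so $\Phi$ is a local homeomorphism at regular points; at a tip-finger $x_1\in G_1$, the base neighborhoods $U_{[\gamma]}$ are in bijection with choices of the next crossing $\theta\in\sigma^{-1}(x_1)$ together with a local neighborhood in the subsequent sheet, which is precisely the local structure coming from the single-point gluings in the construction of $\hat C^r_\nu$. The main obstacle I anticipate is justifying rigorously the ``finite essential crossing representative'' reduction inside a homotopy class — i.e.\ showing that if a representative has infinite or non-discrete essential $G_1$-intersection, then no representative of the same class has a finite one. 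This will need the fact that essential crossings are a homotopy invariant (sketched above via the squeezing of $G_1$ between points of $P_\nu$) together with a separation argument isolating finitely many versus infinitely many crossings; everything else should follow from the path-lifting property of $\pi:\tilde X\to X$ and Proposition \ref{prop:subgroup}.
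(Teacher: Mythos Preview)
The paper states this proposition without proof, so there is no argument of the paper's to compare yours against. That said, your proposal contains a substantive confusion between the two restricted Riemanniums.

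You have mixed up $\tilde C^r_{\nu}$ with $C^r_{\nu}$. The space $\tilde C^r_{\nu}$ is the quotient of $\check C^r_{\nu}$, which is assembled from copies of the \emph{universal cover} $\tilde\cS^*_{\nu}$, not of $\cS^*_{\nu}$; and the sheet identification defining $\tilde C^r_{\nu}$ is the exact equality $M(\boldsymbol\theta)=M(\boldsymbol\theta')$, \emph{not} congruence modulo $\bigoplus_{k\geq 0}2^{-k}\lambda_k\ZZ$ as in (\ref{eq:modulo_mon}). The modulo-dyadic identification belongs to $C^r_{\nu}$, which is the subject of the next proposition and corresponds to $H_3$. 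Consequently your step (ii) is wrong on both sides: a loop disjoint from $G_1$ is \emph{not} in $H_2$ (the definition of $H_2$ requires a non-empty essential intersection; such loops lie in $H_3$ but not in $H_2$), and the target $\tilde C^r_{\nu}$ does not quotient out the Weierstrass monodromy. As written, your outline is really an argument for $X_{H_3}\cong C^r_{\nu}$.

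To repair it for the present statement, replace the sheets $\cS^*_{\nu}(\boldsymbol\theta)$ by $\tilde\cS^*_{\nu}(\boldsymbol\theta)$ throughout, drop the modulo-dyadic reduction in (ii), and note that a loop with empty essential $G_1$-intersection lifts inside a single simply connected sheet $\tilde\cS^*_{\nu}$ to a non-closed path unless it is null-homotopic in $\CC-K$; this is exactly why such loops are excluded from $H_2$ and why the Weierstrass monodromy survives in $\tilde C^r_{\nu}$.
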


Now let $H_3 \subset \pi_1(X)$ be the subgroup of loops in $\CC-P$ which have 
an essential finite intersection with $G_1$ (that can be empty and 
we add the null loop also to  have a subgroup). This subgroup is not closed.

\begin{proposition}
The quotient space $X_{H_3}=\tilde X/\mathord\sim$ corresponding to the subgroup $H_3$ is the restricted Cantor Riemannium $C^r_{\nu}$. 
\end{proposition}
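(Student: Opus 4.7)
By Proposition~\ref{prop:subgroup}, $X_{H_3}$ is a Hausdorff $X$-space. The plan is to produce a fibre-preserving homeomorphism $X_{H_3}\cong C^r_{\nu}$ by combining the preceding identification $X_{H_2}\cong \tilde C^r_{\nu}$ with the dyadic quotient step in Definition~\ref{def:Cantor_Riemannnium}. Since $H_2\subset H_3$, there is a canonical continuous surjection $q:X_{H_2}\to X_{H_3}$, and it suffices to show that under the identification $X_{H_2}\cong \tilde C^r_{\nu}$ the map $q$ coincides with the quotient $\tilde C^r_{\nu}\to C^r_{\nu}$ whose equivalence relation identifies $\cS^*_{\nu}(\boldsymbol{\theta})$ with $\cS^*_{\nu}(\boldsymbol{\theta}')$ precisely when $M(\boldsymbol{\theta})\equiv M(\boldsymbol{\theta}') \pmod{\bigoplus_{k\geq 0}2^{-k}\lambda_k\ZZ}$.

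The key observation is that $H_3\setminus H_2$ consists exactly of the non-trivial loops whose essential intersection with $G_1$ is empty, i.e.\ loops that after homotopy in $\CC-P$ lie entirely in $\CC-K$. For such a loop $\eta$ the residue computation of Proposition~\ref{prop:W-mono} applied to $F_{\nu}$, with the fundamental piece of level $k$ now carrying $\nu$-mass $2^{-k}\lambda_k$, yields $\rho(\eta)\in\bigoplus_{k\geq 0}2^{-k}\lambda_k\ZZ$, and every such dyadic value is realised by a Jordan loop in $\CC-K$. I would then establish both directions of the claimed equivalence: if $q([\gamma_1])=q([\gamma_2])$ then $\gamma_1\cdot\gamma_2^{-1}\in H_3$, and a pairwise cancellation of the essential crossings of each $\gamma_i$ with $G_1$ via $H_2$-equivalences reduces $\gamma_1\cdot\gamma_2^{-1}$ modulo $H_2$ to a loop in $\pi_1(\CC-K)$, whose monodromy contribution is dyadic, so $M(\boldsymbol{\theta}_1)-M(\boldsymbol{\theta}_2)$ lies in the Weierstrass group; conversely, when two sheets carry monodromies differing by a dyadic $d$, concatenation with a Weierstrass Jordan loop in $\CC-K$ realising $d$ implements the desired $H_3$-identification.

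The main obstacle is the pairwise cancellation step: given $\eta\in H_3$ with finitely many essential intersections with $G_1$, one must decompose $\eta$ modulo $H_2$ into simple ``lasso'' loops each encircling one essential crossing, whose pairings cancel and leave a pure Weierstrass residue in $\pi_1(\CC-K)$. This is a planar-topology argument in $\CC-P$ built on the fact that an essential crossing at $\sigma(\theta)$ with $\theta\in\sigma^{-1}(G_1)$ can be undone modulo $H_2$ by a simple loop that re-crosses at $\sigma(\theta)$ with the opposite orientation, together with an induction on the number of essential crossings to bring them into canonical cancelling pairs. Once this decomposition is secured, the bijection $X_{H_3}\cong C^r_{\nu}$ respects the projections to $\cS^*_{\nu}\approx \CC-P$ and the log-euclidean length space structures on both sides, so it is a homeomorphism and a local isometry on the regular part, recovering the topological and metric conclusions already established for $C^r_{\nu}$ in Section~4.
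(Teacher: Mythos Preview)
The paper states this proposition without proof, so there is no argument to compare against; your proposal is the only proof on the table. Your overall strategy---factor through the identification $X_{H_2}\cong\tilde C^r_{\nu}$ and then match the further quotient $X_{H_2}\to X_{H_3}$ with the passage $\tilde C^r_{\nu}\to C^r_{\nu}$---is the natural one and is sound in outline. Two points deserve correction, however.

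First, the ``pairwise cancellation'' difficulty you flag is illusory. By the very definitions of $H_2$ and $H_3$, a loop in $H_3$ whose essential intersection with $G_1$ is finite and \emph{non-empty} already lies in $H_2$; the only new loops in $H_3\setminus H_2$ are those with \emph{empty} essential intersection, and these are homotopic in $\CC-P$ to loops lying entirely in $\CC-K$ (non-essential crossings can be slid off, as observed just before the definition of essential intersection). Hence no inductive cancellation of crossings is needed: the coset $H_3/H_2$ is represented directly by the image of $\pi_1(\CC-K,z_0)$, whose Borel monodromy is exactly the Weierstrass group $W_{\nu}=\bigoplus_{k\geq 1}2^{-k}\lambda_k\ZZ$ by Theorem~\ref{th:Borel_monodromy}.

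Second, you describe the quotient $\tilde C^r_{\nu}\to C^r_{\nu}$ only as the dyadic identification of sheet labels, but there is a second effect you must also account for: the sheets themselves change from copies of $\tilde\cS^*_{\nu}$ to copies of $\cS^*_{\nu}$, i.e.\ one also collapses the universal cover $\tilde\cS^*_{\nu}\to\cS^*_{\nu}$ on each sheet. The deck group of that covering is again generated by loops in $\CC-K$, so it too is absorbed by the action of $H_3\setminus H_2$; once you make this explicit, both effects are governed by the same Weierstrass loops and your bijection goes through. Without this observation the argument is incomplete, since identifying only the sheet labels modulo $W_{\nu}$ while keeping $\tilde\cS^*_{\nu}$-sheets does not yield $C^r_{\nu}$.
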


Another interesting closed subgroup is $H_4\subset \pi_1(X)$, the subgroup of loops contained in $\CC-K$. The quotient space $X_{H_4}$ is in some sense 
the ``universal cover'' of the non-regular part.

% 
% As in the classical situations, we can study the classification of ``universal projections'' 
% above $X$, $Y\to X$, but we leave this for future work.

\section{Borel extension of a holomorphic germ.}\label{sec:Borel_ext}

Our aim in this section is not to define the most general notion of Borel type extension, 
but just one that is has the uniqueness properties 
for the Cantor Riemannium construction.

We consider a holomorphic germ $F: U\to \CC$ defined in a neigborhood $U$ of $z_0\in \CC$ and a continuous path $\gamma: [0,1]\to \CC$ without self-intersections such that 
\begin{align*}
\gamma(0) &= z_0 \\
\gamma(1) &= z_1 \\
\end{align*}

\begin{definition}
Let $\epsilon >0$. A continuous path $\eta: [0,1]\to \CC$ without self-intersections such that 
\begin{align*}
\eta(0) &= z_0 \\
\eta(1) &= z_1 \\
\end{align*}
is $\epsilon$-Jordan homotopic to $\gamma$ if $\eta([0,1])$ and $\gamma([0,1])$  are $\epsilon$-close in Hausdorff topology, 
and $\eta([0,1])\cup \gamma([0,1])$ is a Jordan domain.
\end{definition}

\begin{definition}[Polar measure]
Let $R\in \CC(z)$ be a rational function. The polar measure of $R$ in the Riemann sphere $\overline {\CC}$ is 
the atomic complex measure
$$
\mu_R =\sum_{\rho} (\Res_\rho R) \ \delta_\rho
$$
i.e. the purely atomic measure  with support at the poles with complex atomic mass the residue at this point. 
\end{definition}

\begin{definition}
For $M>0$, we consider the space $\cM(M)$ of rational functions with a polar measure of total variation bounded by $M>0$,
$$
|\mu_R| (\CC) \leq M<+\infty \ .
$$
\end{definition}

\begin{definition}
Let $\gamma$ be a path  as before.
An asymptotically holomorphic  sequence $(f_n)$ on $\gamma$ is a sequence of meromorphic 
functions in $\cM(M)$ for some $M>0$, with simple poles out of $\gamma([0,1])$, 
such that for any $\epsilon>0$ and neighborhood $V_\epsilon (\gamma([0,1]))$
we have
$$
\lim_{n\to +\infty} \mu_{f_n} (V_\epsilon (\gamma([0,1])))=0 
$$
where $V_\epsilon (\gamma([0,1]))$ is the $\epsilon$-neighborhood of the support of $\gamma$.
\end{definition}

\begin{definition}\label{def:Borel_ext}
Let $F:U\to \CC$ be an  holomorphic germ and a path $\gamma$ as before.
If we have an asymptotically holomorphic sequence $(f_n)$ on $\gamma$ such that $(f_n)$ converges uniformly on $\gamma([0,1])$
to a continuous function $f:\gamma([0,1])\to \CC$ such that $f_{/U} =F'$, then we define 
$$
F(z_1)=\int_\gamma f(t) \, dt = \lim_{n\to +\infty} \int_\gamma f_n(t) \, dt  
$$
to be the Borel extension of $F$ at $z_1$ along $\gamma$ for the approximating sequence $(f_n)$.
\end{definition}

\begin{proposition}
If $(\gamma_k)$ is a sequence of $\epsilon_k$-Jordan homotopic paths to $\gamma$ with $\epsilon_k\to 0$, then 
$$
\lim_{n,k\to +\infty} \int_{\gamma_k} f_n(t) \, dt =\int_\gamma f(t) \, dt
$$
\end{proposition}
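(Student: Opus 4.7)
The plan is to split
\begin{equation*}
\int_{\gamma_k} f_n \, dt - \int_\gamma f(t) \, dt = \left( \int_{\gamma_k} f_n \, dt - \int_\gamma f_n \, dt \right) + \left( \int_\gamma f_n \, dt - \int_\gamma f(t) \, dt \right)
\end{equation*}
and show that each bracket tends to zero as $(n,k)\to (\infty,\infty)$. The second bracket is handled directly by Definition \ref{def:Borel_ext}, which posits $\int_\gamma f(t)\,dt = \lim_{n\to\infty} \int_\gamma f_n \,dt$; this term tends to zero as $n\to\infty$, uniformly in $k$.

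For the first bracket, $\gamma_k$ and $\gamma$ share endpoints and, by the $\epsilon_k$-Jordan homotopy hypothesis, $\gamma_k\cup \gamma$ is the boundary of a Jordan domain $\Omega_k$. Since the poles of $f_n$ form a finite set disjoint from $\gamma$, by an arbitrarily small perturbation of $\gamma_k$ (staying within its Hausdorff $\epsilon_k$-class and preserving endpoints) one may assume $\gamma_k$ avoids those poles as well. Applying the Monogenic Residue Formula (Proposition \ref{prop:mon_residue}) to $f_n$ on $\Omega_k$ then yields
\begin{equation*}
\int_\gamma f_n \, dt - \int_{\gamma_k} f_n \, dt = \pm\, \mu_{f_n}(\Omega_k),
\end{equation*}
the sign depending on the orientation of $\partial\Omega_k$ relative to $\gamma\cdot\gamma_k^{-1}$.

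The next step is a short geometric sublemma: for every $\epsilon>0$ there exists $k_0$ such that $\Omega_k \subset V_\epsilon(\gamma([0,1]))$ for all $k\geq k_0$. This reduces to the statement that a Jordan curve lying in a thin tubular neighborhood of an embedded arc must bound a region contained in a slightly wider tube, combined with $\partial\Omega_k\subset V_{\epsilon_k}(\gamma([0,1]))$. Given the sublemma and the asymptotic holomorphy hypothesis $|\mu_{f_n}|(V_\epsilon(\gamma([0,1])))\to 0$ as $n\to\infty$, for $k\geq k_0$ one has $|\mu_{f_n}(\Omega_k)| \leq |\mu_{f_n}|(V_\epsilon(\gamma([0,1])))$, which tends to zero as $n\to\infty$. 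Given $\delta>0$, first fix $\epsilon$ and the corresponding $k_0$ from the sublemma, then choose $n_0$ so large that, for $n\geq n_0$, both $|\mu_{f_n}|(V_\epsilon(\gamma([0,1])))<\delta/2$ and $\bigl|\int_\gamma f_n\,dt - \int_\gamma f(t)\,dt\bigr|<\delta/2$; the conclusion follows for all $(n,k)$ with $n\geq n_0$, $k\geq k_0$.

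The main obstacle is the geometric sublemma: Hausdorff closeness of $\gamma_k$ to $\gamma$ directly controls only the boundary $\partial\Omega_k$, not the interior of $\Omega_k$. The content is a planar topology (Schoenflies-flavored) assertion that a Jordan curve contained in a narrow tube around an embedded arc must bound a region inside a slightly wider tube, which requires care because for small $\epsilon$ the tube is a topological disk but the bounded component could a priori be on either side. A minor secondary technicality is making the pole-avoiding perturbation of $\gamma_k$ uniform in $n$; since only a cofinal sequence in $n$ is needed and the pole sets are finite for each fixed $n$, this can be handled by a diagonal argument.
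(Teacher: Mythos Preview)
Your approach is essentially the same as the paper's: form the Jordan domain $\Omega_k$ bounded by $\gamma\cup\gamma_k$ and apply the Monogenic Residue Formula to identify $\int_{\gamma_k} f_n - \int_\gamma f_n$ with $\pm\mu_{f_n}(\Omega_k)$, then use the asymptotic holomorphy hypothesis. You are in fact more careful than the paper about the geometric sublemma $\Omega_k\subset V_\epsilon(\gamma([0,1]))$ and about the double-limit bookkeeping, both of which the paper leaves implicit.
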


\begin{proof}
Let $\Omega_k$ be the Jordan domain with boundary $\gamma([0,1])\cup \gamma_k([0,1])$. We have by the Monogenic Residue Formula, 
Proposition \ref{prop:mon_residue}, when $k\to +\infty$,
$$
\int_{\gamma_k} f_n(t)(t) \, dt - \int_{\gamma_k} f_n(t)(t) \, dt = \mu_{f_n}(\Omega_k) \to 0
$$
and the result follows.
\end{proof}

The definition in general does depend on the choice of the asymptotically 
holomorphic sequence. In the case where $F'$ is the  Cauchy transform
of a measure $\mu$ supported on a Cantor set $K$,  and we approximate by a sequence $(f_n)$ of 
Cauchy transforms then we have the following Theorem:

\begin{proposition}
We assume that 
$$
F'(z)=f_{\mu}(z)=\frac{1}{2\pi i} \int_{\CC} \frac{d\mu(t)}{z-t}
$$
i.e. $\bar \partial \partial F =\mu$ in the sense of distributions is a finite total mass measure supported on $K=\supp \mu$ that is a 
Cantor set. We also assume that $\mu (\gamma([0,1])) =0$.

If we take  an asymptotically holomorphic sequence $(f_n)$  in Definition \ref{def:Borel_ext} such that 
$F(z_1) \not= \infty$, then the extension $F(z_1)$
is independent of the chosen sequence.

More generally, we can take a sequence $(f_n)$ of Cauchy transforms of measures $(\mu_{f_n})\subset \cM(M)$, for some $M>0$, such that
$\mu_{f_n}\to \mu$ and $\supp(\mu_{f_n})\to \supp(\mu)$.
\end{proposition}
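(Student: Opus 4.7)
The plan is to use the monogenic residue formula (Proposition \ref{prop:mon_residue}) to push the integral along $\gamma$ off of the Cantor support $K$ onto a perturbed path $\gamma'\subset\CC\setminus K$ where the approximating sequences converge tamely. Concretely, let $(f_n)$ and $(g_n)$ be two admissible approximating sequences of Cauchy transforms of measures $\mu_{f_n},\mu_{g_n}\in\cM(M)$, both weak-star converging to $\mu$ with supports converging to $K$ in Hausdorff distance. I will show that both $\lim_n \int_\gamma f_n$ and $\lim_n \int_\gamma g_n$ equal a common expression involving only $\mu$ and a fixed auxiliary curve $\gamma'$.

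First, since $K=\supp\mu$ is a Cantor set, hence nowhere dense in $\CC$, I can construct, for any small $\eps>0$, a rectifiable path $\gamma'$ that is $\eps$-Jordan homotopic to $\gamma$, sharing the endpoints $z_0,z_1$, with $\gamma'\subset\CC\setminus K$; an arbitrarily small further perturbation ensures that $\gamma'$ misses the finite pole sets of $f_n$ and $g_n$ for each $n$. Let $\Omega$ denote the Jordan domain enclosed by the Jordan curve $\gamma\cup\gamma'$. Applying Proposition \ref{prop:mon_residue} to the rational function $f_n$ across $\partial\Omega$ gives
$$
\int_\gamma f_n\,dt\ -\ \int_{\gamma'} f_n\,dt\ =\ \pm\,\mu_{f_n}(\Omega),
$$
the sign arising from orientation, and the analogous identity holds for $g_n$.

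Next I pass to the limit $n\to\infty$ in each piece. Since $\gamma'$ is a compact subset of $\CC\setminus K$ and the approximating measures satisfy the two hypotheses of the opening Proposition of the paper (uniformly bounded mass, weak convergence to $\mu$, Hausdorff convergence of supports to $K$), both $f_n\to f_\mu$ and $g_n\to f_\mu$ uniformly on $\gamma'$; hence $\int_{\gamma'} f_n \to \int_{\gamma'} f_\mu$ and similarly for $g_n$. For the bulk term, note that $\partial\Omega\subset \gamma\cup\gamma'$, and by hypothesis $\mu(\gamma)=0$ while $\mu(\gamma')=0$ because $\gamma'\subset\CC\setminus K$; therefore $\mu(\partial\Omega)=0$, so weak convergence yields $\mu_{f_n}(\Omega)\to\mu(\Omega)$ and likewise for $g_n$. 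Combining,
$$
\lim_n \int_\gamma f_n\,dt \ =\ \int_{\gamma'} f_\mu\,dt \,\pm\, \mu(\Omega)\ =\ \lim_n \int_\gamma g_n\,dt,
$$
since the middle expression is determined only by $\mu$, $\gamma$ and the chosen $\gamma'$. The hypothesis $F(z_1)\neq\infty$ ensures both limits exist finitely, so the equality is meaningful. In the stronger asymptotically holomorphic setting, the condition $\mu_{f_n}(V_\eps(\gamma))\to 0$ forces the bulk term to vanish for $\Omega\subset V_\eps(\gamma)$, recovering the cleaner formula $\lim_n\int_\gamma f_n = \int_{\gamma'} f_\mu$.

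The main technical obstacle is arranging the residue formula to apply despite poles of $f_n, g_n$ clustering toward $K$, which itself may meet $\gamma$. The trick is that one never tries to integrate over $\gamma$ directly in the limit; one pushes everything onto $\gamma'\subset\CC\setminus K$, where the Cauchy kernels are uniformly bounded, and collects the "leftover" contribution in the single scalar $\mu_{f_n}(\Omega)$, whose limit is controlled by weak convergence thanks to $\mu(\partial\Omega)=0$. The choice of $\gamma'$ introduces an ambiguity in $\mu(\Omega)$ that corresponds to deforming the path on one side versus the other of $\gamma$, but an independent residue-formula argument applied to $f_\mu$ itself (via approximation) shows that $\int_{\gamma'} f_\mu + \mu(\Omega)$ does not in fact depend on the deformation; the value is intrinsic to $(\mu,\gamma)$, hence independent of $(f_n)$, which is precisely what the Proposition claims.
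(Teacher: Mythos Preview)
Your argument is correct and follows the same core strategy as the paper: perturb $\gamma$ to a nearby Jordan-homotopic path that avoids $K$, and then use that all admissible approximating Cauchy transforms converge uniformly to $f_\mu$ on compacta of $\CC\setminus K$. The paper isolates the perturbation step as a separate Lemma (proved via Moise's theorem that plane homeomorphisms act transitively on Cantor subsets), then simply \emph{replaces} $\gamma$ by the perturbed path and invokes dominated convergence to get $\lim_n\int_\gamma f_n=\int_\gamma F'=\lim_n\int_\gamma g_n$ directly; you instead keep the original $\gamma$, introduce an auxiliary $\gamma'$, and carry the residue term $\mu_{f_n}(\Omega)\to\mu(\Omega)$ explicitly, which is a harmless bookkeeping variant of the same idea. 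One small slip: you assert $\gamma'\subset\CC\setminus K$, but if $z_1\in K$ this is impossible since $\gamma'(1)=z_1$; the paper's Lemma only claims $\gamma'\cap K\subset\{z_0,z_1\}$, and correspondingly the passage to the limit on $\int_{\gamma'}$ should appeal to dominated convergence (as the paper does) rather than to uniform convergence on a compact subset of $\CC\setminus K$.
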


\begin{lemma}
Given a Cantor set $K\subset \CC$ and $\gamma$ a continuous non-self-intersecting path, for any $\epsilon>0$ we can find an $\epsilon$-Jordan 
homotopic path such that it does not intersect $K$ except eventually at the end-points.
\end{lemma}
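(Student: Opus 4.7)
The plan is to reduce to the straight-segment case via the Schoenflies theorem, and then construct $\eta$ on one side of the segment by exploiting the fact that a compact totally disconnected set in $\RR^2$ does not locally separate the plane.

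First, I would apply the Schoenflies theorem to a Jordan curve extending $\gamma$ to obtain a homeomorphism $\Psi:\CC\to\CC$ with $\Psi(\gamma([0,1]))=I:=[0,1]\times\{0\}$, $\Psi(z_0)=(0,0)$, and $\Psi(z_1)=(1,0)$, and set $K'=\Psi(K)$. Using uniform continuity of $\Psi^{-1}$ on a neighborhood of $I$, I would choose $\delta\in(0,1/4)$ so that the closed rectangle $[-\delta,1+\delta]\times[-\delta,\delta]$ is mapped by $\Psi^{-1}$ into the $\epsilon$-neighborhood of $\gamma([0,1])$. It then suffices to construct a Jordan arc $\eta':[0,1]\to\CC$ from $(0,0)$ to $(1,0)$ contained in the closed half-strip $S=[0,1]\times[0,\delta]$, with $\eta'((0,1))\subset(0,1)\times(0,\delta)$ and $\eta'((0,1))\cap K'=\emptyset$. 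Setting $\eta=\Psi^{-1}\circ\eta'$ then gives the required path: $\eta\cup\gamma$ is a Jordan curve (hence bounds a Jordan domain), and Hausdorff closeness is built in.

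Next, I would invoke two facts about the closed totally disconnected set $K'\subset\RR^2$. \textbf{Fact (a):} any connected open set $U\subset\RR^2$ minus $K'$ remains path-connected, since a closed subset of topological dimension $0$ cannot separate a connected open subset of a manifold of dimension $\geq 2$, and the complement is open hence locally path-connected. \textbf{Fact (b), accessibility from one side:} for any point $p\in\RR\times\{0\}$ and small $r>0$, the half-disk $D_r:=B(p,r)\cap\{y>0\}$ satisfies that $D_r\setminus K'$ is nonempty (as $K'$ is nowhere dense) and path-connected by Fact (a); taking a nested sequence $r_n\downarrow 0$ and points $q_n\in D_{r_n}\setminus K'$, connecting successive $q_n,q_{n+1}$ by arcs inside $D_{r_n}\setminus K'$ and concatenating produces a continuous map $[0,1)\to D_{r_0}\setminus K'$ with limit $p$; extending by $p$ at the endpoint yields a continuous path from $p$ into $D_{r_0}\setminus K'$ meeting $K'$ only possibly at $p$.

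I would then apply Fact (b) at $(0,0)$ and $(1,0)$ to produce short paths $\alpha_0,\alpha_1$ from those endpoints into $R:=(0,1)\times(0,\delta)$, ending at points $a,b\in R\setminus K'$ close to the respective endpoints. By Fact (a) applied to the connected open $R$, the points $a,b$ are joined by a path $\beta\subset R\setminus K'$, which I may arrange to lie in $S$. The concatenation $\alpha_0\cdot\beta\cdot\alpha_1^{-1}$ is a continuous path from $(0,0)$ to $(1,0)$ in $S$ whose interior avoids $K'\cup I$; a standard sub-loop-excision argument in a locally path-connected Hausdorff space then replaces it by an injective such path $\eta'$. Then $\eta'$ and $I$ are Jordan arcs meeting only at their two common endpoints, so $\eta'\cup I$ is a Jordan curve by the Jordan curve theorem. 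The main obstacle is Fact (b), the accessibility of $(0,0)$ and $(1,0)$ from the open upper half-plane modulo $K'$ when those points may themselves belong to $K'$; this in turn reduces to the dimension-theoretic principle that a closed subset of $\RR^n$ of topological dimension $0$ cannot locally separate the ambient manifold when $n\geq 2$. Once that is granted, the accessibility concatenation, the joining path from Fact (a), and the path-simplification are all routine.
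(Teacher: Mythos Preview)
Your proof is correct and follows a genuinely different route from the paper's. The paper normalises the \emph{Cantor set}: it invokes the classical fact (Moise) that self-homeomorphisms of the plane act transitively on Cantor subsets, sends $K$ to the standard triadic Cantor set $C\subset[0,1]\subset\RR$, then perturbs $h(\gamma)$ first so that it meets $\RR$ only in isolated points and second, locally near each such point, so that it misses $C$; finally it pulls back by $h^{-1}$. You instead normalise the \emph{arc}: Schoenflies straightens $\gamma$ to the segment $I$, and then you thread a new arc through a thin one-sided strip above $I$ using that a closed $0$-dimensional set does not separate a connected open planar set, together with an accessibility argument at the endpoints. Both arguments rest on comparable pieces of classical plane topology; the paper's is shorter once Moise's transitivity theorem is on the table, while yours uses more widely quoted ingredients (Schoenflies plus Hurewicz--Wallman non-separation) and makes the one-sided Jordan homotopy completely explicit.

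Two minor points to tighten. First, your choice of $\delta$ via uniform continuity of $\Psi^{-1}$ as stated only yields $\eta\subset V_\epsilon(\gamma)$; for the full Hausdorff $\epsilon$-closeness observe that the $x$-projection of the connected set $\eta'([0,1])\subset[0,1]\times[0,\delta]$ onto $[0,1]$ is onto, so every point of $I$ lies within $\delta$ of $\eta'$, and shrink $\delta$ so that $\Psi^{-1}$ moves $\delta$-close points in the rectangle to $\epsilon$-close points. Second, at the corners $(0,0)$ and $(1,0)$ your ``half-disks'' $D_r$ spill out of the strip $S$; replace them by nested quarter-boxes $\,(0,1/n)\times(0,\min(\delta,1/n))\subset R$ (respectively at the other corner), to which Fact~(a) applies verbatim, so that the accessibility paths $\alpha_0,\alpha_1$ stay in $\overline R$ and the extracted simple arc $\eta'$ indeed has $\eta'((0,1))\subset R\setminus K'$.
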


\begin{proof}
We use the useful result that homeomorphisms of the plane act transitively on Cantor subsets (see \cite{Mo} Theorem 7 p.93). Note that this is false in 
dimension $3$ as the Antoine necklace $A\subset \RR^3$ cannot be transported a Cantor set $C$ on a segment since $\pi_1(\RR^3-A)$ is not trivial and $\pi_1(\RR^3-C)$
is trivial.

Therefore, we can find a global homeomorphism $h:\CC\to \CC$ that transforms $K$ into the standard triadic Cantor set on $C\subset [0,1]\subset \CC$. Then the result is easy to prove, 
we first perturb $h(\gamma)$ into a Jordan equivalent path that only intersects the real line at isolated points, and then we do a second perturbation locally at  
each such point that intersects $C$ so that the new path does not intersect $C=h(K)$. The final observation is that Jordan small perturbation can be transported by 
the homeomorphism $h$.
\end{proof}

\begin{proof}
Using the previous Lemma we are reduced to the case when $\gamma$ intersects $K$ only at its end-point.
Consider another such sequence $(g_n)$. We have 
$$
\mu_{f_n-g_n} = \mu_{f_n}-\mu_{g_n} 
$$
thus $\mu_{f_n-g_n}\to 0$. By Lebesgue dominated convergence, the Cauchy transforms $f_{\mu_{f_n}}$ and $g_{\mu_{g_n}}$ converges pointwise to $F'$ on $\CC-K$. 
Since they are uniformly bounded on compact subsets of $\CC-K$, they also converge uniformly on compact sets of $\CC-K$ to $F'$. Again by Lebesgue dominated convergence on the 
integral along $\gamma$ (we can replace $\gamma$ by a Jordan homotopic rectifiable path) we have
$$
F(z_1)=\lim_{n\to +\infty} \int_\gamma f_n(t) \, dt=\lim_{n\to +\infty} \int_\gamma g_n(t) \, dt =\int_\gamma F'(t) \, dt
$$
\end{proof}

When we consider the Borel extension along a path $\gamma$ with self-intersections, but intersecting $C$ at isolated points transversally to $\RR$ where $F_{\nu_1}$ is finite, 
the relative homotopy class $[\gamma]_{\gamma\cap \RR}$ (the intersections points being fixed) can be decomposed into 
the sum of homotopy classes of simple loops without self-intersections with the same intersections on $C$.

\textbf{Acknowledgements.} I am grateful to K. Biswas, Y. Levagnini and M. Pe Pereira for remarks and corrections.
\medskip

{\footnotesize{\textit{First draft in Chist\'en, 24th February 2020, and finished on the 34th day of confinement in Pozuelo de Alarc\'on.} }}

\end{document}